\numberwithin{equation}{section}
\newtheorem{theorem}{Theorem}
\newaliascnt{proposition}{theorem}
\newtheorem{proposition}[proposition]{Proposition}
\newaliascnt{lemma}{theorem}
\newtheorem{lemma}[lemma]{Lemma}
\newaliascnt{corollary}{theorem}
\newtheorem{corollary}[corollary]{Corollary}
\theoremstyle{definition}
\newtheorem{remark}{Remark}
\newaliascnt{definition}{theorem}
\newtheorem{definition}[definition]{Definition}
\newcounter{hypA'}
\newenvironment{hyp}[1]{
\begin{enumerate}[label=(\textbf{\sf #1}-\arabic*),resume=hyp#1]\begin{sf}}
{\end{sf}\end{enumerate}}
\newenvironment{addonehypprim}[1]{
\begin{enumerate}[label=(\textbf{\sf #1'}-\arabic*),resume=hyp#1]\begin{sf}\addtocounter{enumi}{-1}}
  {\end{sf}\end{enumerate}}
\def\rme{\mathrm{e}}
\def\Xset{\mathsf{X}}
\def\Yset{\mathsf{Y}}
\def\Zset{\mathsf{Z}}
\def\Xsigma{\mathcal{X}}
\def\Ysigma{\mathcal{Y}}
\def\Zsigma{\mathcal{Z}}
\def\met{\Delta}
\def\rset{\ensuremath{\mathbb{R}}}
\def\zset{\ensuremath{\mathbb{Z}}}
\def\cset{\ensuremath{\mathbb{C}}}
\def\eqsp{}	%\def\eqsp{\;}
\def\PP{\mathbb{P}}
\def\rmd{\mathrm{d}}
\def\rmi{\mathrm{i}}
\newcommandx\LogInt[5][1=\theta,4=,5=Y]{\upsilon_{#4}^{#1}\langle {#5}_{#2:#3} \rangle}
\newcommand{\as}{\mbox{-a.s.}}
\newcommandx{\aslim}[1]{\ensuremath{\stackrel{#1-\text{a.s.}}{\longrightarrow}}}
\newcommandx\sequence[3][2=n,3=\zset]{\ensuremath{\{ #1_{#2}\,:\, #2 \in #3\}}}
\newcommand\nsequence[2]{\ensuremath{\{ #1\,:\, #2\}}}
\newcommandx\dsequence[4][3=n,4=\zset]{\ensuremath{\{ (#1_{#3},#2_{#3})\,:\,#3 \in #4\}}}
\newcommand{\CPE}[3][]
{\ifthenelse{\equal{#1}{}}%
{\mathbb{E}\left[\left. #2 \, \right| #3 \right]}
{\mathbb{E}_{#1}\left[\left. #2 \, \right| #3 \right]}
}
\newcommand{\CPEu}[4][]
{\ifthenelse{\equal{#1}{}}%
{\mathbb{E}\left[\left. #2 \, \right| #3 \right]}
{\mathbb{E}^{#1}_{#2}\left[\left. #3 \, \right| #4 \right]}
}
\newcommand{\CPEv}[3][]
{\ifthenelse{\equal{#1}{}}%
{\mathbb{E}_\star\left[\left. #2 \, \right| #3 \right]}
{\mathbb{E}_\star_{#1}\left[\left. #2 \, \right| #3 \right]}
}
\newcommand{\CPP}[3][]
{\ifthenelse{\equal{#1}{}}%
{\mathbb{P}\left[\left. #2 \, \right| #3 \right]}
{\mathbb{P}_{#1}\left[\left. #2 \, \right| #3 \right]}
}
\newcommand{\CPPu}[3][]
{\ifthenelse{\equal{#1}{}}%
{\mathbb{P}\left[\left. #2 \, \right| #3 \right]}
{\mathbb{P}^{#1}\left[\left. #2 \, \right| #3 \right]}
}
\newcommand{\tCPPu}[3][]
{\ifthenelse{\equal{#1}{}}%
{\tilde{\mathbb{P}}\left[\left. #2 \, \right| #3 \right]}
{\tilde{\mathbb{P}}^{#1}\left[\left. #2 \, \right| #3 \right]}
}
\newcommandx{\chunk}[3]%
{\ensuremath{#1}_{#2:#3}}
\def\1{\mathbbm{1}}
\newcommandx\proj[2][1=,2=]{
\ifthenelse{\equal{#1}{}}
{\operatorname{X}}
{\operatorname{X}_{#1:#2}}i
}
\newcommand{\eqdef}{:=}
\newcommand{\set}[2]{\left\{#1:#2\right\}}
\newcommand{\setvect}[2]{\left(#1\right)_{#2}}
\newcommandx\lkdM[3][1=,3=]{
\ifthenelse{\equal{#2}{}}
{ \mathsf{L}_{#1}^{#3}}
{ \mathsf{L}_{#1}^{#3}\langle #2\rangle}
}
\newcommandx\lkdMStat[3][1=,3=]{
\ifthenelse{\equal{#2}{}}
{ \bar{\mathsf{L}}_{#1}^{#3}}
{ \bar{\mathsf{L}}_{#1}^{#3}\langle #2 \rangle}
}
\newcommandx\lkd[3][1=,3=]{
\ifthenelse{\equal{#2}{}}
{ \ell_{#1}^{#3}}
{ \ell_{#1}^{#3}\langle #2\rangle}
}
\newcommandx\lkdStat[3][1=,3=]{
\ifthenelse{\equal{#2}{}}
{ \bar \ell_{#1}^{#3}}
{ \bar \ell_{#1}^{#3}\langle #2 \rangle}
}
\def\initmle{z^{(\text{\tiny{i}})}}
\def\initmlex{x^{(\text{\tiny{i}})}}
\def\initmleu{u^{(\text{\tiny{i}})}}
\newcommandx\phimomentinit[1][1=\theta]{\phi^{#1}}
\newcommandx{\normLip}[2][1=]{\mathrm{Lip}(#2;#1)}
\newcommandx{\wass}[2][1]{\lVert #2\rVert_{#1}}
\newcommandx{\wasser}[3][1=]{\mathcal{W}_{#1}\left(#2,#3\right)}
\newcommandx{\proho}[3][1=]{\mathcal{P}_{#1}\left(#2,#3\right)}
\newcommandx{\dobru}[2][1=]{\dobrush_{#1}\left( #2\right)}
\newcommand{\dobrush}{\Delta}
\newcommand{\Uset}{\mathrm U}
\newcommand{\Usigma}{\mathcal U}
\newcommandx{\Pcan}[2][1=,2=]{\mathbb{P}_{#1}^{#2}}
\newcommandx{\Ecan}[2][1=,2=]{\mathbb{E}_{#1}^{#2}}
\newcommandx\cesp[4][1=,2=]{\ensuremath{{\mathbb E}_{#1}^{#2}\left[ \left. #3 \right| #4 \right]}}
\newcommandx{\f}[3][1=\theta, 3=]{\psi^{#1}_{#3}\langle #2 \rangle}
\newcommandx{\tf}[2][1=\theta]{\tilde{\psi}^{#1}\langle #2 \rangle}
\newcommandx{\F}[2][1=\theta]{\Psi^{#1}\langle #2 \rangle}
\newcommandx{\tF}[2][1=\theta]{\tilde{\Psi}^{#1}\langle #2 \rangle}
\newcommandx{\flp}[2][1=\theta]{\widehat{\psi}^{#1}\langle #2 \rangle}
\newcommandx{\FLP}[2][1=\theta]{\widehat{\Psi}^{#1}\langle #2 \rangle}
\newcommand{\thv}{{\theta_\star}}
\newcommandx{\kap}[3][1=\theta]{
\ifthenelse{\equal{#3}{}}
{\kappa^{#1}\langle #2\rangle }
{\kappa^{#1}\langle #2\rangle (#3)}
}
\def\zsetp{\zset_{\geq 0}}
\def\zsetn{\zset_{\leq 0}}
\def\zsetpnz{\zset_{>0}}
\def\rsetp{\rset_{\geq0}}
\def\rsetpnz{\rset_{>0}}
\newcommandx{\probdoeblin}[3][1=]{\mu^{#1}_{#2}\langle #3 \rangle}
\newcommand{\Pblock}[2][]
{\ifthenelse{\equal{#1}{}}{\boldsymbol{\operatorname{L}}\langle#2\rangle}{\boldsymbol{\operatorname{L}}^{#1}\langle#2\rangle}
}
\newcommand{\ConPblock}[3][]
{\ifthenelse{\equal{#1}{}}{\boldsymbol{\operatorname{L}}\langle#2|#3\rangle}{\boldsymbol{\operatorname{L}}^{#1}\langle#2|#3\rangle}
}
\newcommand{\pblock}[2][]
{\ifthenelse{\equal{#1}{}}{\mathbf{\ell}\langle#2\rangle}{\mathbf{\ell}^{#1}\langle #2\rangle}
}
\newcommand{\Vset}{\mathsf{V}}
\newcommand{\Vsigma}{\mathcal{V}}
\newcommandx{\limlike}[4][1=\theta, 2=\theta_\star]{p^{#1,#2}\left( #3 | #4 \right)}
\def\Xmet{\boldsymbol{\delta}_\Xset}
\def\Zmet{\boldsymbol{\delta}_\Zset}
\def\Umet{\boldsymbol{\delta}_\Uset}
\newcommand{\Projarg}[2]{\Pi_{#1}\left(#2\right)}
\newcommand{\Proj}[1]{\Pi_{#1}}
\newcommandx{\vnorm}[2][1=V]{\left|#2\right|_{#1}}
\begin{document}

\title[Identifiability of observation-driven models]{Necessary and sufficient conditions for the identifiability of
    observation-driven models}

\author{Randal Douc}
\author{Fran\c{c}ois Roueff}
\author{Tepmony Sim}

 \address{D\'epartement  CITI \\
 CNRS UMR 5157 \\
 T\'el\'ecom SudParis \\
 91000 \'Evry \\
 France}
\email{randal.douc@telecom-sudparis.eu}

\address{LTCI\\
 T\'el\'ecom Paris\\
 Institut Polytechnique de Paris \\
 19 place Marguerite Perey,\\
 91120 Palaiseau \\
 France}
\email{roueff@telecom-paristech.fr}

\address{Department of Foundation Year \\
Institute of Technology of Cambodia \\
 12156 Phnom Penh \\
 Cambodia}
\email{tepmony.sim@itc.edu.kh}

\date{\today}

\begin{abstract}
  \sloppy In this contribution we are interested in proving that a
  given observation-driven model is identifiable. In the case of a
  GARCH$(p,q)$ model, a simple sufficient condition has been
  established in \cite{berkes03} for showing the consistency of the
  quasi-maximum likelihood estimator. It turns out that this condition
  applies for a much larger class of observation-driven models, that
  we call the class of linearly observation-driven models. This class
  includes standard integer valued observation-driven time series such
  as the Poisson autoregression model and its numerous extensions. Our
  results also apply to vector-valued time series such as the bivariate
  integer valued GARCH model, to non-linear models such as the threshold
  Poisson autoregression or to observation-driven models with
  exogenous covariates such as the PARX model.
\end{abstract}

\subjclass[2000]{Primary: 60J05, 62F12; Secondary: 62M05,62M10.}
\keywords{identifiability, observation-driven models, time series of counts}

\maketitle

\section{Introduction}
\sloppy

Observation-driven models (ODM) were introduced in \cite{cox:1981} and
have received considerable attention since. They are commonly used for
modeling various non-linear times series in applications ranging from
economics (see \cite{pindyck1998econometric}), environmental study
(see \cite{bhaskaran2013time}), epidemiology and public health study
(see \cite{zeger1988regression, davis1999modeling,
  ferland:latour:oraichi:2006}), finance (see
\cite{liesenfeld2003univariate,
  rydberg2003dynamics,fokianos:tjostheim:2011,francq2011garch}) and
population dynamics (see \cite{ives2003estimating}). Additional
covariates have been added to some of these models leading to GARCHX
type models, see \cite{AGOSTO2016640} for recent examples in the
context of count data, and the references therein. We include such a
case in our setting leading to the general observation-driven models
with exogenous variables (ODMX).

As often for non-linear time series the question of identifiability of
the observation-driven models is a delicate one and is often appearing
as an assumption used for proving the consistency (say) of the maximum
likelihood estimator. A noticeable exception is the GARCH$(p,q)$
model, for which an explicit sufficient condition appears in
\cite{berkes03}, see their condition (2.27). We will in fact prove
that this condition is not only sufficient but also necessary for the
identifiability, and that this result extends to a much larger class of
observation-driven models than the GARCH($p,q$) model. See
\autoref{thm:gener-ident-cond} below and the comments following this
result.

We provide general conditions to ensure that an ODM or an ODMX
 defined through a collection of
parameterized iterative schemes uniquely describes the law of the
observations. In other words our conditions ensure that two different
iterative schemes within the same model cannot produce the same law
for the observations. Then a given parameter is identifiable if two
different values of the parameter are not compatible with the same
iterative scheme. Let us stress, however, that we do not consider the misspecified
case here, that is, we always assume that the observations indeed
follow the (unique) stationary distribution corresponding to (at
least) one given parameter of the model. Our setting is nevertheless
of interest for the misspecified setting since a non-identifiable
parameter (in the well specified case) cannot be identified in the
misspecified case. Hence the necessity of our conditions remains true
for the misspecifed setting.

A special class of ODMs, that we call \emph{linearly observation
  driven models} (LODMs) below, arises when the hidden variable is
obtained linearly from hidden or observed variables of the past, and
when all these variables are univariate, as for the GARCH($p,q$)
model. This latter model was extensively studied, see for example
\cite{bougerol:picard:1992, francq2004maximum, francq2009tour,
  lindner2009stationarity, francq2011garch} and the references
therein. Many other examples, linear or non-linear, univariate or
multivariate, have been derived from this class, see
\cite{bollerslev08-glossary} for a long list of them, although this
list have been lengthened quite significantly since, in particular
because of the recent adding of various integer valued ODMs to deal
with count time series (see \cite{christou2015count,silva19} and the
references therein). Our goal is to derive necessary and sufficient
conditions potentially applying to a wide variety of ergodic
observation driven models. To illustrate the generality of our
results, we apply them to a list of various examples which includes,
in addition to the standard GARCH model, the nonlinear GARCH model of
\cite{hamadeh-zakoian:jspi2011}, the INGARCH model of
\cite{ferland:latour:oraichi:2006}, the Log-linear Poisson GARCH of
\cite{fokianos:tjostheim:2011}, the MPINGARCH model of
\cite{silva19}, the PARX model of \cite{AGOSTO2016640}, the
Bi-variate integer GARCH model of \cite{cui-zhu-2018} and the
self-excited threshold Poisson Autoregression of \cite{wang2014self}.
We are able to derive necessary and sufficient conditions for
identifiability for all the considered examples.

The rest of the paper is organized as follows.
\autoref{sec:preliminaries} contains additional notation and definitions that
will be used throughout the paper. \autoref{sec:examples} contains
a list of examples already considered in the literature. Our main results can be found
in \autoref{sec:main-results}, some proofs of which are postponed to
\autoref{sec:append:proofs:gen:od}. Before that, in
\autoref{sec:application}, we show how our results apply to the
examples of \autoref{sec:examples} or can be extended to larger
classes of models.

%\clearpage

\section{Preliminaries}
\label{sec:preliminaries}
\subsection{Formal definitions of observation driven models}
Let us now formally introduce the class of observation-driven models
and important sub-classes.  Throughout the paper we use the notation
$\chunk{u}{\ell}{m}\eqdef(u_\ell,\ldots,u_m)$ for $\ell \leq m$, with
the convention that $\chunk{u}{\ell}{m}$ is the empty sequence if
$\ell>m$, so that, for instance $(\chunk x0{(-1)},y)=y$.  The
observation-driven time series model can formally be defined as
follows.

\begin{definition}[ODM, ODMX]\label{def:obs-driv-gen}
  Let $(\Xset,\Xsigma)$, $(\Yset,\Ysigma)$ and $(\Uset,\Usigma)$ be measurable
  spaces, respectively called the \emph{latent space}, the \emph{observation
    space} and the \emph{admissible observation space}. Let $(\Theta,\met)$ be a
  compact metric space, called the \emph{parameter space}.  Let $\Upsilon$ be a
  measurable function from $(\Yset,\Ysigma)$ to $(\Uset,\Usigma)$.  Let
  $\set{(\chunk{x}{1}{p},\chunk{u}{1}{q}) \mapsto
    \tilde\psi^\theta_{\chunk{u}{1}{q}}(\chunk{x}{1}{p})}{\theta \in \Theta}$ be a
  family of measurable functions from
  $(\Xset^p\times \Uset^q, \Xsigma^{\otimes p} \otimes \Usigma^{\otimes q})$ to
  $(\Xset, \Xsigma)$, called the \emph{reduced link functions} and let
  $\set{G^\theta}{\theta \in \Theta}$ be a family of probability kernels on
  $\Xset\times\Ysigma$, called the \emph{observation kernels}. A time series
  $\nsequence{Y_k}{k \ge -q+1}$ valued in $\Yset$ is said to be distributed
  according to an \emph{observation-driven model of order $(p,q)$} (hereafter,
  ODM$(p, q)$) with reduced link function $\tilde\psi^\theta$,
  admissible mapping $\Upsilon$ and observation kernel $G^\theta$ if there
  exists a process $\nsequence{X_k}{k \ge -p+1}$ on $(\Xset, \Xsigma)$ such
  that for all $k\in\zsetp$,
\begin{equation}\label{eq:def:gen-ob}
\begin{split}
&Y_{k}\mid \mathcal{F}_{k}\sim G^\theta(X_{k};\cdot)\eqsp,\\
&X_{k+1}=\tilde\psi^\theta_{\chunk{U}{(k-q+1)}{k}}(\chunk{X}{(k-p+1)}{k})\eqsp,
\end{split}
\end{equation}
where
$\mathcal{F}_k=\sigma\left(\chunk{X}{(-p+1)}{k},\chunk{Y}{(-q+1)}{(k-1)}\right)$
and $U_j=\Upsilon(Y_j)$ for all $j>-q$.

In the presence of exogenous variables
defined as an $r$-Markov chain
valued in the space $(\Vset,\Vsigma)$ with kernel $H$, the admissible mapping
$\Upsilon$ is defined from $\Yset\times\Vset$ to $\Uset$ and the
iterative equation~(\ref{eq:def:gen-ob}) is replaced by,
  for all $k\in\zsetp$,
\begin{equation}\label{eq:def:gen-ob-X}
\begin{split}
&V_{k}\mid \mathcal{F}_{k}\sim H(\chunk V{(k-r)}{(k-1)};\cdot)\eqsp,\\
&Y_{k}\mid \mathcal{F}_{k}\sim G^\theta(X_{k};\cdot)\eqsp,\\
&X_{k+1}=\tilde\psi^\theta_{\chunk{U}{(k-q+1)}{k}}(\chunk{X}{(k-p+1)}{k})\eqsp,
\end{split}
\end{equation}
where, in this case,
$\mathcal{F}_k=\sigma\left(\chunk{X}{(-p+1)}{k},\chunk{Y}{(-q+1)}{(k-1)},\chunk{V}{((-r)\wedge
    (-q+1))}{(k-1)}\right)$ and $U_j=\Upsilon(Y_j,V_j)$ for all $j>-q$. We then say that the time series
  $\nsequence{Y_k}{k \ge -q+1}$ valued in $\Yset$ is distributed
  according to an \emph{observation-driven model of order $(p,q)$ with
  $r$-order  Markov exogenous variables} $\nsequence{V_k}{k \ge((-r)\wedge
    (-q+1))}$  (hereafter,
  ODMX$(p, q,r)$) with reduced link function $\tilde\psi^\theta$,
  admissible mapping $\Upsilon$, observation kernel $G^\theta$, and
  exogenous Markov kernel $H$.

  The variables $Y_k$ are called the
  \emph{observed variables}, the variables $X_k$ the
  \emph{hidden variables} and  the variables $U_k$ the
  \emph{admissible variables}. In addition, we define the
  \emph{augmented variables}
\begin{equation}
 \label{eq:Zk:def}
Z_k=\left(\chunk X{(k-p+1)}k,\chunk
  U{(k-q+1)}{(k-1)}\right)\in\Zset\;,
\end{equation}
which take values in the \emph{augmented space}
\begin{equation}
  \label{eq:Zset:def}
\Zset  =\Xset^p\times\Uset^{q-1}\quad\text{endowed with the $\sigma$-field $\Zsigma=\Xsigma^{\otimes p}\otimes\Usigma^{\otimes {(q-1)}}$.}
\end{equation}
\end{definition}

\begin{remark}\label{rem:rem-after-def}
  Let us briefly comment on the unusual notion of \emph{admissible
    mapping} which allows us to define the admissible variables
  $U_j=\Upsilon(Y_j)$ of an ODM($p,q$) in \autoref{def:obs-driv-gen}:
    \begin{enumerate}[label=(\arabic*)]
\item For all $k\geq0$, the conditional
distribution of $(Y_k,X_{k+1})$ given $\mathcal{F}_k $ only depends on
$Z_k$ defined by~(\ref{eq:Zk:def}).
\item The time series $\set{U_k}{k>-q}$ is also
an ODM$(p,q)$ with admissible mapping being the identity, link function
$\tilde\psi^\theta$ and observation kernel
$\tilde G^\theta(x,\cdot)= G^\theta(x,\Upsilon^{-1}(\cdot))$ on the observation
space $(\Uset,\Usigma)$.
\item On the other hand, we can also set the admissible mapping to be
  the identity for the ODM $\set{Y_k}{k>-q}$, in which case
  $\Yset\subseteq\Uset$ and the
  reduced link function should be replaced by the \emph{link function}
  defined
  all $(x,\chunk y1q)\in\Xset^p\times\Yset^q$ by
\begin{equation}
 \label{eq:tilde-psi:def}
  \psi^\theta_{\chunk y1q}(x)= \tilde\psi^\theta_{\chunk
    u1q}(x)\quad\text{with}\quad u_k=\Upsilon(y_k)\quad\text{for}\quad
  1\leq k\leq q\;.
\end{equation}
In fact the advantage of using an admissible mapping is precisely to
obtain a reduced link function $\tilde\psi$, more convenient than the
(non-reduced) link function $\psi$. We will focus in the particular
case where $\tilde\psi$ is linear, into which we can cast not all but
many observation driven models, see Section~\ref{sec:examples}
hereafter.
\item An ODMX($p,q,r$) can be cast into an  ODM($p,q$) by defining
   $\tilde Y_k=(Y_k,\chunk V{(k-r+1)}{k})$ and
$\tilde X_k=(X_k,\chunk V{(k-r)}{(k-1)})$ and observing that the
obtained times series $\nsequence{\tilde Y_k}{k\geq-q+1}$ is an  ODM($p,q$)
with hidden variables $\nsequence{\tilde X_k}{k\geq-p+1}$. However for
treating identifiability as is the purpose here, it is more convenient to
keep distinguishing between the ODM and the ODMX
setting.
\item\label{item:rem-item:tildeG} In the following the variables $U_k$ and $Z_k$ will be used
  extensively as they simplify a lot the presentation and the
  reasoning. It is important to note that the definitions of $U_k$, $Z_k$ and
   $\mathcal{F}_k$ are
  not the same in the ODM and the ODMX settings as they involve $V_k$ in
  the later case. In particular the conditional distribution of $U_k$
  given $\mathcal{F}_k$ takes two very different forms in the ODM and
  ODMX cases. They can be respectively expressed by $\tilde{G}^\theta(X_k;\cdot)$
  and $\tilde{G}^\theta((X_k,\chunk V{(k-r)}{(k-1)});\cdot)$  where
  $\tilde{G}^\theta$ is a probability
  kernel on $\Xset\times\Usigma$ and
  $(\Xset\times\Vset^r)\times\Usigma$, resp. For conciseness
  we use the same notation $\tilde{G}^\theta$ for the two
  cases. They are resp. defined by setting, for all $x\in\Xset$,
  $A\in\Usigma$ and $v\in\Vset^r$,
  \begin{align}
    \label{eq:def:tildeG}
    \tilde{G}^\theta(x,A)&=G^\theta(x,\Upsilon^{-1}(A))\;,\\
    \label{eq:def:tildeG:X}
    \tilde{G}^\theta((x,v),A)&=\int_{\Yset\times\Vset}
                                        \1_A(\Upsilon(y,w))\,G^\theta(x;\rmd
                                        y)\,H(v;\rmd w)\;.
  \end{align}
\end{enumerate}
\end{remark}
When the reduced link function is linear we specify \autoref{def:obs-driv-gen}
into the following.
\begin{definition}[(V)LODM(X)]\label{def:obs-driv-gen:vlodm}
  We say that an ODM($p,q$) (resp. ODMX($p,q,r$)) is a \emph{vector
    linearly observation-driven model of order $(p,q,p',q')$},
  shortened as VLODM$(p,q,p',q')$, (resp. VLODMX$(p,q,r,p',q')$) if
  for some $p',q' \in \zsetpnz$, $\Xset$ and $\Uset$ are closed
  subsets of $\rset^{p'}$ and $\rset^{q'}$, respectively, and, for all
  $x=\chunk{x}{0}{(p-1)}\in\Xset^p$,
  $u=\chunk{u}{0}{(q-1)}\in\Uset^q$, and $\theta\in\Theta$,
\begin{equation}
  \label{eq:tilde-psi:affine:vector:case}
\tilde\psi^\theta_{u}(x)=\boldsymbol{\omega}(\theta)+\sum_{i=1}^p A_i(\theta) \, x_{p-i}
+\sum_{i=1}^q B_i(\theta) \, u_{q-i}\;,
\end{equation}
for some mappings $\boldsymbol{\omega}$, $\chunk A1p$ and $\chunk B1q$
defined on $\Theta$ and valued in $\rset^{p'}$,
$\left(\rset^{p'\times p'}\right)^p$ and
$\left(\rset^{p'\times q'}\right)^q$.  In the case where $p'=q'=1$,
the VLODM$(p,q,p',q')$ (resp.  VLODMX$(p,q,r,p',q')$) is simply called a \emph{linearly
  observation-driven model of order $(p,q)$}, shortened as
LODM$(p,q)$ (resp. LODMX$(p,q,r)$).
\end{definition}
   \subsection{Iterations of the link function}
\label{sec:iter-link-funct}

We now introduce iterated versions of the reduced link function
$\tilde\psi^\theta$.  Let $\Zset$ be defined
by~(\ref{eq:Zset:def}). We define for any $k\in\zsetpnz$ and
$\chunk{u}{0}{(k-1)}\in\Uset^{k}$, the mapping
$\tf{\chunk{u}{0}{(k-1)}}:\Zset\to\Xset$ through a set of recursive
equations of order $(p,q)$.  Namely, for all $n\in\zsetpnz$,
$\chunk{u}{0}{(k-1)}\in\Uset^{k}$ and $z=\chunk z1{(p+q-1)}\in\Zset$,
we define
\begin{align}
  \label{eq:lkd:psi:n:def:rec:u}
&  \tf{\chunk{u}0{(k-1)}}(z)\eqdef x_{k} \;,
\end{align}
where the sequence $\chunk x{(-p+1)}{k}$ is defined by
\begin{align}
  \label{eq:pq-order-rec-equation}
\begin{cases}
u_j=z_{p+q+j} \;,&
    -q< j\leq -1\;, \\
x_j=z_{p+j} \;,&
    -p< j\leq 0\;,\\
x_j=\tilde\psi_{\chunk{u}{(j-q)}{(j-1)}}^\theta\left(\chunk x{(j-p)}{(j-1)}\right)\;,&
    1\leq j\leq k\;.
  \end{cases}
\end{align}
In this set of equations the last line is applied recursively so that in fact,
for all $j\geq1$, $x_{j}$ only depends on $z$ and $\chunk
u0{(j-1)}$.

The equations in~(\ref{eq:pq-order-rec-equation}) define a system with
input sequence $\chunk{u}{(-q+1)}{(k-1)}$, initial condition
$\chunk x{(-p+1)}0$ and output sequence $\chunk x1k$.  Because the
recursion given by the last line of~(\ref{eq:pq-order-rec-equation})
involves $p+1$ successive entries of the output and $q$ successive
entries of the input, it is useful to define blocks, valued in
$\Zset=\Xset^p\times\Uset^{q-1}$ and consider the same recursion
applying to such blocks, hence computing $z_j$ from $z_{j-1}$ and
$u_{j-1}$. Formally, for all $u\in\Uset$, we define
$\tilde\Psi^\theta_u\,:\,\Zset\to\Zset$ by
\begin{align}\label{eq:def:tpsi:gen:od}
  \tilde{\Psi}^\theta_{u}:(\chunk x1p,\chunk u1{(q-1)})%\chunk
  % z1{(p+q-1)}
  \mapsto
  \begin{cases}
  \left(\chunk x2p,
  \tilde{\psi}^\theta_{(\chunk u{1}{(q-1)},u)}(\chunk x1p), \chunk u{2}{(q-1)},
  u\right)&\text{ if $q>1$}\\
  \left(\chunk x2p,
    \tilde{\psi}^\theta_{u}(\chunk x1p)\right)&\text{ if $q=1$}\;,
  \end{cases}
\end{align}
\begin{remark}\label{rem:def:gen-ob-Z}
Note in particular that with this notation at hand, and using the
admissible variables $U_k=\Upsilon(Y_k)$ for the ODM case or
$U_k=\Upsilon(Y_k,V_k)$ for the ODMX case, and $Z_k$ defined by~(\ref{eq:Zk:def}), the second line
of~(\ref{eq:def:gen-ob}) and the third line of~(\ref{eq:def:gen-ob-X})
are equivalent to
\begin{equation}\label{eq:def:gen-ob-Z}
Z_{k+1}=  \tilde{\Psi}^\theta_{U_k}(Z_{k})\eqsp.
\end{equation}  
\end{remark}
We further denote the successive composition of  $\tilde\Psi^\theta_{u_0}$,
$\tilde\Psi^\theta_{u_1}$, ..., and $\tilde\Psi^\theta_{u_{k-1}}$ by
\begin{equation}
\label{eq:notationItere:f:cl:gen}
\tF{\chunk{u}{0}{(k-1)}}=\tilde\Psi^\theta_{u_{k-1}} \circ \tilde\Psi^\theta_{u_{k-2}} \circ \dots \circ \tilde\Psi^\theta_{u_0}\,.
\end{equation}
This recursion is the same as the one for defining $\tf{u}$, except
that it is valued in $\Zset$, where as $\tf{u}$ is valued in
$\Xset$. More precisely, denoting, throughout the paper, for all
$j\in\{1,\ldots,p+q-1\}$, by $\Projarg{j}{z}$ the $j$-th entry
of $z\in\Zset$, we have the following
relations between $\tf{u}$ and $\tF{u}$, for all $k\in\zsetp$ and
$u\in\Uset^k$,
\begin{align}
\label{eq:psi:Psi:relation}
\tf{u}&=\Proj p\circ\tF{u} \;,\\
\label{eq:psi:Psi:relation:recip}
  \tF{\chunk{u}{0}{(k-1)}}(z)&=\left(\setvect{\tf{\chunk{u}{0}{j}}(z)}{k-p\leq
                               j<k},\chunk
  u{(k-q+1)}{(k-1)}\right)\;,
\end{align}
where, in the second line, we set $u_j=\Projarg {p+q+j}z$ for $-q< j\leq -1$ and use the convention
$\tf{\chunk{u}{0}{j}}(z)=\Projarg {p-j}z$ for $-p< j\leq 0$.

\subsection{Ergodic assumption and some interesting class of parameters}
\label{sec:ergodic-ass}
In this contribution, we only consider the case where
all processes in the model are ergodic. Namely, we use the following assumption.
\begin{hyp}{A}
\item\label{assum:gen:identif:unique:pi:gen} For all
  $\theta\in\Theta$, there exists a unique stationary solution $\nsequence{(X_k,Y_k)}{k\in\zset}$
satisfying~(\ref{eq:def:gen-ob}).
\end{hyp}
In the case of exogenous covariates this assumption is replaced by
the following.
\begin{addonehypprim}{A}
  \item\label{assum:gen:identif:unique:pi:gen:X}
  For all $\theta\in\Theta$, there exists a unique stationary solution
  $\nsequence{(X_k,Y_k,V_k)}{k\in\zset}$
  satisfying~(\ref{eq:def:gen-ob-X}).
\end{addonehypprim}

This ergodic property is the cornerstone for making statistical inference
theory work and we provide simple general conditions in \cite{douc2015handy}
for $p=q=1$ and in \cite[Chapter~5]{sim-tel-01458087,douc-roueff-sim19} for the
case of general order $(p,q)$.

We now introduce the notation that will allow
us to refer to the stationary distribution of the model throughout the {paper}.
\begin{definition}[Stationary distributions $\PP^\theta$ and
  $\tilde{\mathbb{P}}^\theta$] \label{def:ergo:theta:gen:od}
We define the
  distributions $\PP^\theta$ and $\tilde{\mathbb{P}}^\theta$ as follows.
\begin{enumerate}[label=\alph*)]
\item Under \ref{assum:gen:identif:unique:pi:gen}, $\PP^\theta$ denotes the distribution on
  $((\Xset\times\Yset)^{\zset},(\Xsigma\times\Ysigma)^{\otimes\zset})$ of the stationary solution
  of~(\ref{eq:def:gen-ob});  Under \ref{assum:gen:identif:unique:pi:gen:X}, $\PP^\theta$ denotes the distribution on
  $((\Xset\times\Yset\times\Vset)^{\zset},(\Xsigma\otimes\Ysigma\otimes\Vsigma)^{\otimes\zset})$ of the stationary solution
  of~(\ref{eq:def:gen-ob-X}).
\item Under \ref{assum:gen:identif:unique:pi:gen},
  $\tilde{\mathbb{P}}^\theta$ denotes the projection of  $\PP^\theta$
  on the component $\Yset^{\zset}$;  Under \ref{assum:gen:identif:unique:pi:gen:X},
  $\tilde{\mathbb{P}}^\theta$ denotes the projection of  $\PP^\theta$
  on the component $(\Yset\times\Vset)^{\zset}$.
\end{enumerate}
We also use the symbols
$\mathbb{E}^\theta$ and $\tilde{\mathbb{E}}^\theta$ to denote the expectations
corresponding to $\PP^\theta$ and $\tilde{\mathbb{P}}^\theta$, respectively.
\end{definition}

To study the identifiability of ergodic ODM's, we introduce equivalent
classes that define a partition of the parameter set in subsets of
parameters which share the same distribution of
observations. Formally, it reads as follows.

\begin{definition}[Equivalent classes for
  $\tilde{\mathbb{P}}^\theta$] \label{def:equi:theta:gen:od} Suppose
  that \ref{assum:gen:identif:unique:pi:gen} or  \ref{assum:gen:identif:unique:pi:gen:X} holds and define
  $\tilde{\mathbb{P}}^\theta$ as in \autoref{def:ergo:theta:gen:od}. For all $\theta, \theta'\in\Theta$, we write
  $\theta\sim\theta'$ if and only if
  $\tilde{\mathbb{P}}^{\theta}=\tilde{\mathbb{P}}^{\theta'}$. This
  defines an equivalence relation on the parameter set $\Theta$ and,
  for any $\theta\in\Theta$,
  the equivalence class of $\theta$ is denoted by
  $[\theta]\eqdef\{\theta' \in\Theta:\; \theta'\sim\theta\}$.
\end{definition}
\begin{remark}
  In the context of exogenous variables, that is, under
  \ref{assum:gen:identif:unique:pi:gen:X}, since the distribution of
  $\nsequence{V_k}{k\in\zset}$ under $\tilde{\PP}^\theta$ does not
  depend on $\theta$,
  $\tilde{\mathbb{P}}^{\theta}=\tilde{\mathbb{P}}^{\theta'}$ is
  equivalent to say that the conditional distribution of
  $\nsequence{Y_k}{k\in\zset}$ given $\nsequence{V_k}{k\in\zset}$ is
  the same under $\tilde{\PP}^\theta$ and under
  $\tilde{\PP}^{\theta'}$.
\end{remark}
Determining the equivalent classes $[\theta]$ for all
$\theta\in\Theta$ amounts to solve the identifiability of a parameter
under the assumption of a \emph{well specified} model. Namely,
assuming that the distribution of the observations is given by
$\tilde\PP^\thv$ for some (unknown) parameter $\thv\in\Theta$, a
parameter $\xi(\thv)$ is identifiable if and only if the given mapping
$\xi$ is constant over the equivalent class $[\thv]$. Without
identifiability, the consistency of any estimator of $\xi(\thv)$ is
not possible.  A special case is when $[\thv]$ reduces to the
singleton $\{\thv\}$, so that every parameter $\xi(\thv)$ is
identifiable, in which case the model is said to be identifiable.
Obviously, if $\theta$ and $\thv$ share the same iterative
equation~(\ref{eq:def:gen-ob}) (or ~(\ref{eq:def:gen-ob-X} with
exogenous covariates), that is, if $G^{\theta}=G^{\thv}$ and
$\tilde\psi_u^\theta(x)=\tilde\psi_u^\thv(x)$ for all
$(u,x)\in\Uset^q\times\Xset^p$, by uniqueness of the stationary
distribution, they must share the same one and in particular we get
$\tilde{\mathbb{P}}^\theta=\tilde{\mathbb{P}}^\thv$. Thus, using the more convenient notation $\tilde{\Psi}$ introduced
in~(\ref{eq:def:tpsi:gen:od}),
we have
\begin{equation}
  \label{eq:ident_fundamental}
\set{\theta\in\Theta}{G^{\theta}=G^{\thv}\text{ and
    }\tilde{\Psi}^{\theta}_u(z)=\tilde{\Psi}^{\thv}_u(z)\text{ for all
    }(z,u)\in\Zset\times\Uset}\subseteq  [\thv]
  \;.
\end{equation}
We will provide general conditions ensuring that this inclusion
becomes an equality, see \autoref{thm:the-main-result-general-setting}
below. However it may happen in standard situations that this
inclusion is strict, as will be seen in
\autoref{rem:standard-lodms}\ref{item:exple-strict-inclusion}. Nevertheless,
in all the considered examples, it will be possible to recover an equality by
replacing $\Zset$ by a more appropriate subset in the left-hand side
of~(\ref{eq:ident_fundamental}). 

As often for ODMs, our results rely on the assumption that,
under $\PP^\theta$, the hidden variables are measurable with respect
to the admissible variables from the past. This is not completely surprising since,
using the notation introduced in \autoref{sec:iter-link-funct}, iterating the link
function, we have that, for all $\theta\in\Theta$ and all $s<t$ in
$\zset$,
$$
X_t=\tf{\chunk{U}{s}{(t-1)}}(Z_s)\qquad\PP^\theta\as
$$
In particular, taking $t=1$ and letting $s$ decrease backward towards
$-\infty$, we get that, $X_1$ is measurable with respect to
$\cap_{t\in\zset}\left(\mathcal{F}^Z_t\vee\mathcal{F}^U_0\right)$, where
  $(\mathcal{F}^Z_t)$ and $(\mathcal{F}^U_t)$  respectively denote
  the natural filtrations of $\sequence{Z}$ and $\sequence{U}$. To our
  knowledge, all ODM of interest satisfy in fact the stronger property
  that $X_1$  is measurable with respect to $\mathcal{F}^U_0$, which
  is sometimes called the \emph{invertibility condition}. This
  condition is now introduced with some notation for expressing $X_1$
  as a measurable function of $\chunk U{(-\infty)}0$.
  \begin{hyp}{A}
\item \label{item:X:from:Y:determ}
For all
$\theta\in\Theta$, the measurable function
  $\tf[\theta]{\cdot}:\Uset^{\zsetn}\to\Xset$ satisfies
  \begin{equation}
    \label{eq:identif-XY}
X_1 =  \tf[\theta]{\chunk{U}{(-\infty)}{0}} \qquad \PP^\theta\as
  \end{equation}
\end{hyp}
Since $\PP^\theta$ is stationary,~(\ref{eq:identif-XY}) also implies
that, for all $t\in\zset$,
$X_{t+1} = \tf[\theta]{\chunk{U}{(-\infty)}{t}}$ $\PP^\theta\as$
For an ODM (resp. an ODMX), we have $U_k=\Upsilon(Y_k)$ (resp. $U_k=\Upsilon(Y_k,V_k)$).
Thus
Assumption~\ref{item:X:from:Y:determ} allows us to derive the $X_t$'s
from the $Y_t$'s (resp. from the $Y_t$'s and $V_t$'s) and therefore to
rewrite the relationship given through the link function in the second
line of~(\ref{eq:def:gen-ob}) (resp. in the third line
of~(\ref{eq:def:gen-ob-X})) between these variables in terms of a
recursive relationship involving only the $Y_t$'s (resp. the $Y_t$'s
and the $V_t$'s). It turns out that Condition~(\ref{eq:identif-XY}) in
\ref{item:X:from:Y:determ} can be verified using $\tilde\PP^\theta$
only, that is, we do not need $\PP^\theta$ but only its marginal onto
the variable $Y_k$'s (resp. the variables $Y_t$'s and the
$V_t$'s), as shown by the following result.
\begin{lemma}
  \label{lem:identif-XY}
  Consider an ODM$(p,q)$ satisfying
  \ref{assum:gen:identif:unique:pi:gen} with $p,q\in\zsetpnz$ or an
  ODMX$(p,q,r)$ satisfying \ref{assum:gen:identif:unique:pi:gen:X}
  with $p,q,r\in\zsetpnz$.  Let $\theta\in\Theta$ and consider a
  measurable function
  $\tf[\theta]{\cdot}:\Uset^{\zset_-}\to\Xset$. Then~(\ref{eq:identif-XY})
  is satisfied if and only if the two following equations hold.
 \begin{align}
    \label{eq:identif-XYa}
&  \tf[\theta]{\chunk{U}{(-\infty)}{0}}=\tilde \psi^\theta_{\chunk
   U{(-q+1)}{0}}\left(\setvect{\tf[\theta]{\chunk{U}{(-\infty)}{j}}}{-p\leq j\leq-1}\right)
    \qquad \tilde\PP^\theta\as    \\
       \label{eq:identif-XYb}
&       \tCPPu[\theta]{Y_1\in\cdot}{\chunk Y{(-\infty)}0} =G^\theta\left(\tf[\theta]{\chunk{U}{(-\infty)}{0}},\cdot\right)    \quad \tilde\PP^\theta\as
 \end{align}
\end{lemma}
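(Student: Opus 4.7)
The plan is to prove the two implications separately. For the ``only if'' direction, assume $X_1 = \f[\theta]{\chunk{Y}{-\infty}{0}}$ holds $\PP^\theta$-a.s. Shifting this identity by each $k \in \zset$ and invoking stationarity of $(X_k, Y_k)$ under $\PP^\theta$ yields $X_{j+1} = \f[\theta]{\chunk{Y}{-\infty}{j}}$ $\PP^\theta$-a.s.\ for every $j$. Plugging these identities into the ODM recursion $X_1 = \psi^\theta_{\chunk Y{-q+1}{0}}(\chunk X{-p+1}{0})$ (a consequence of~(\ref{eq:def:gen-ob}) at $k=0$) produces~(\ref{eq:identif-XYa}), which, being an event in the $Y$-sigma-algebra, equivalently holds $\tilde\PP^\theta$-a.s. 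For~(\ref{eq:identif-XYb}), the variable $X_1$ is now $\sigma(\chunk Y{-\infty}{0})$-measurable, and $\sigma(\chunk Y{-\infty}{0}) \subset \mathcal{F}_1$, so conditioning the identity $\PP^\theta(Y_1 \in \cdot \mid \mathcal{F}_1) = G^\theta(X_1; \cdot)$ down onto $\sigma(\chunk Y{-\infty}{0})$ gives~(\ref{eq:identif-XYb}) directly.

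For the ``if'' direction, I would work on the probability space carrying $\tilde\PP^\theta$ and introduce the auxiliary process $\hat X_k \eqdef \f[\theta]{\chunk Y{-\infty}{k-1}}$ for every $k \in \zset$. Each $\hat X_j$ is $\sigma(\chunk Y{-\infty}{j-1})$-measurable, so $(\hat X_k, Y_k)_{k \in \zset}$ is stationary under $\tilde\PP^\theta$ and its natural filtration $\sigma(\chunk{\hat X}{-\infty}{k}, \chunk Y{-\infty}{k-1})$ collapses to $\sigma(\chunk Y{-\infty}{k-1})$. Shifting~(\ref{eq:identif-XYa}) by $k$ produces the desired update $\hat X_{k+1} = \psi^\theta_{\chunk Y{k-q+1}{k}}(\hat X_{k-p+1}, \ldots, \hat X_k)$, while shifting~(\ref{eq:identif-XYb}) by $k$ yields $\tCPPu[\theta]{Y_{k+1} \in \cdot}{\chunk Y{-\infty}{k}} = G^\theta(\hat X_{k+1}; \cdot)$. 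Taken together, these two displays show that $(\hat X, Y)$ is a stationary solution of~(\ref{eq:def:gen-ob}) in the sense of~\autoref{def:obs-driv-gen}.

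Hypothesis~\ref{assum:gen:identif:unique:pi:gen} therefore identifies the joint law of $(\hat X, Y)$ with the law of $(X, Y)$ under $\PP^\theta$. Since under the former $\hat X$ is a deterministic measurable function of $Y$, the elementary coupling principle that ``$(V, W) \stackrel{d}{=} (g(W), W)$ with $g$ measurable forces $V = g(W)$ almost surely'' (applied coordinatewise) implies $X_k = \f[\theta]{\chunk Y{-\infty}{k-1}}$ $\PP^\theta$-a.s.\ for every $k$, and specialising to $k = 1$ gives~(\ref{eq:identif-XY}).

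The main obstacle lies in the ``if'' direction, specifically in the clean verification that the constructed $(\hat X, Y)$ is a bona fide stationary solution of~(\ref{eq:def:gen-ob}); the crucial point is the reduction of its natural filtration to $\sigma(\chunk Y{-\infty}{k-1})$, which is precisely what lets~(\ref{eq:identif-XYb}) serve as the observation-kernel identity required by~\autoref{def:obs-driv-gen}. Once this reduction is secure, the uniqueness hypothesis and the elementary coupling argument close the proof quickly.
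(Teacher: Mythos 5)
Your proposal is correct and follows essentially the same route as the paper: shift-invariance plus the model equations for the ``only if'' direction, and for the ``if'' direction the construction of the auxiliary process $X'_k=\f[\theta]{\chunk{Y}{-\infty}{(k-1)}}$, verification that it yields a stationary solution of~(\ref{eq:def:gen-ob}), and an appeal to the uniqueness in~\ref{assum:gen:identif:unique:pi:gen} to identify its law with $\PP^\theta$ and read off~(\ref{eq:identif-XY}). Your explicit remarks on the collapse of the natural filtration to $\sigma(\chunk Y{-\infty}{(k-1)})$ and on the coupling step are details the paper leaves implicit, but they do not change the argument.
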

\begin{proof}
Suppose that~(\ref{eq:identif-XY}) holds true. Since
$\PP^\theta$ is shift invariant, it can be extended to all time
instants $k\in\zset$, namely,
$$
X_k =  \tf[\theta]{\chunk{U}{(-\infty)}{(k-1)}} \qquad \PP^\theta\as
$$
But then~(\ref{eq:identif-XYa}) and~(\ref{eq:identif-XYb}) follows from the
model equations~(\ref{eq:def:gen-ob}) in the case of an ODM or
~(\ref{eq:def:gen-ob-X}) in the case of an ODMX.

Suppose now that~(\ref{eq:identif-XYa}) and~(\ref{eq:identif-XYb}) hold
true. Since
$\PP^\theta$ is shift invariant, they are extended  to all time instants
$k\in\zset$  in the form
 \begin{align*}
&  \tf[\theta]{\chunk{U}{(-\infty)}{k-1}}=\tilde \psi^\theta_{\chunk
   U{(k-q)}{(k-1)}}\left(\setvect{\tf[\theta]{\chunk{U}{(-\infty)}{j}}}{k-p-1\leq j\leq k-2}\right)
    \quad \tilde\PP^\theta\as    \\
&       \tCPPu[\theta]{Y_k\in\cdot}{\chunk Y{(-\infty)}{(k-1)}} =G^\theta\left(\tf[\theta]{\chunk{U}{(-\infty)}{(k-1)}},\cdot\right)    \quad \tilde\PP^\theta\as
 \end{align*}
 Defining $X'_k=\tf[\theta]{\chunk{U}{(-\infty)}{(k-1)}}$ for all
 $k\in\zset$, we see that $\nsequence{(X'_k,Y_k)}{k\in\zset}$ is a
 stationary sequence satisfying the model
 equations~(\ref{eq:def:gen-ob}) in the ODM case and
 $\nsequence{(X'_k,Y_k,V_k)}{k\in\zset}$ is a stationary sequence
 satisfying the model equations~(\ref{eq:def:gen-ob-X}) in the ODMX
 case. By uniqueness of $\PP^\theta$ assumed in
 \ref{assum:gen:identif:unique:pi:gen} and
 \ref{assum:gen:identif:unique:pi:gen:X}, respectively, we get
 that~(\ref{eq:identif-XY}) holds.
\end{proof}
Now, given
$\thv\in\Theta$, we  introduce the set  $\langle\thv\rangle$ of all parameters
$\theta\in\Theta$ whose recursive relationship~(\ref{eq:identif-XYa})  apply to
almost all trajectories of $\sequence{U}$ under the distribution of
$\thv$.
\begin{definition}[Subset
  $\langle\thv\rangle$] \label{def:ident:theta:gen:od:link} Suppose
  that we are given a measurable
  function $\tf[\theta]{\cdot}:\Uset^{\zsetn}\to\Xset$. Then, for all
  $\thv\in\Theta$, we denote by $\langle\thv\rangle$ the set of all
  parameters $\theta\in\Theta$ satisfying the two following equations
  \begin{align}
    \label{eq:to-prove-deidentif:gen:od}
&    \tf[\theta]{\chunk{U}{(-\infty)}{0}}=\tf[\thv]{\chunk{U}{(-\infty)}{0}} &
    \tilde{\PP}^\thv\as\;,\\
    \label{eq:identif-XYa-thv}
&  \tf[\theta]{\chunk{U}{(-\infty)}{0}}=\tilde \psi^\theta_{\chunk
   U{(-q+1)}{0}}\left(\setvect{\tf[\theta]{\chunk{U}{(-\infty)}{j}}}{-p\leq j\leq-1}\right)
     &\tilde\PP^\thv\as
  \end{align}
\end{definition}
It is important to note that  $\langle\thv\rangle$ of \autoref{def:ident:theta:gen:od:link}
depends on the choice of the class of functions
$\set{\tf[\theta]{\cdot}}{\theta\in\Theta}$ and that Assumption~\ref{item:X:from:Y:determ} alone is not sufficient to
define each $\tf[\theta]{\cdot}$ on the whole set $\Uset^{\zsetn}$ of
trajectories, since Relation~(\ref{eq:identif-XY}) is only
required to hold $\PP^\theta\as$.
We now provide  some Lipschitz condition on the iterates of the link
function $\tilde \psi^\theta$ and a moment condition on $U_0$ that allow us
to build a natural class of functions
$\set{\tf[\theta]{\cdot}}{\theta\in\Theta}$ that
satisfies~\ref{item:X:from:Y:determ}.
Whenever we need some metric on the space $\Zset$, we assume the following.
\begin{hyp}{A}
\item\label{item:CCMShyp} The  $\sigma$-fields $\Xsigma$ and $\Usigma$
are Borel ones, respectively associated to $(\Xset,\Xmet)$ and
$(\Uset,\Umet)$, both assumed to be complete and separable metric spaces.
\end{hyp}
Recall that, for any finite $\Uset$-valued sequence $u$, the mapping $\tf{u}$ is
defined by~(\ref{eq:lkd:psi:n:def:rec:u}) following the recursion
in~(\ref{eq:pq-order-rec-equation}). Define, for all $n\in\zsetpnz$, the
Lipschitz constant for $\tf{u}$, uniform over $u\in\Uset^{n}$,
  \begin{equation}
    \label{eq:Lip:constant:n:def}
\mathrm{Lip}_n^\theta=\sup\set{\frac{\Xmet(\tf{u}(z),\tf{u}(z'))}{\Zmet(z,z')}}{(z,z',u)\in\Zset^{2}\times\Uset^{n}}\;,
\end{equation}
where we set, for all $v=\chunk v1{(p+q-1)}\in\Zset$ and $v'=\chunk{v'}1{(p+q-1)}\in\Zset$,
\begin{equation}
  \label{eq:def:Zmet}
  \Zmet(v,v')=\left(\max_{1\leq k\leq p}\Xmet(v_k,v'_k)\right)\,\bigvee\,
  \left(\max_{p< k<p+q}\Umet(v_k,v'_k)\right)\;.
\end{equation}
We use the following assumptions to define the class of functions $\set{\tf[\theta]{\cdot}}{\theta\in\Theta}$.
\begin{hyp}{A}
\item\label{assum:bound:rho:gen}  For all $\theta\in\Theta$, we have $\mathrm{Lip}_1^\theta<\infty$ and
  $\mathrm{Lip}_n^\theta\to0$ as $n\to\infty$.
\item\label{item:ident-cond-log-moment}
  There exists   $\initmlex_1\in\Xset$ and, if $q>1$,
  $\initmleu_1\in\Uset$ such that the constant vectors
  $\initmlex=(\initmlex_1,\dots,\initmlex_1)\in\Xset^p$ and
  $\initmleu=(\initmleu_1,\dots,\initmleu_1)\in\Uset^{q-1}$
  satisfy, for all $\thv, \theta\in\Theta$,
  \begin{equation}
    \label{eq:cond:moment:XY}
\mathbb{E}^\thv\left[\phimomentinit(U_0)\right]<\infty\;,
\end{equation}
where we defined, for all $u\in\Yset$,
$$
\phimomentinit(u)=
\ln^+\left(\Xmet\left(\initmlex_1,\tilde\psi^\theta_{(\initmleu,u)}(\initmlex)\right)\vee\Umet(\initmleu_1,u)\right)
$$
with the convention $\Umet(\initmleu_1,u)=0$ if $q=1$.
\item\label{item:continuity-in-x-for-u-given} For all
  $\theta\in\Theta$ and $u\in\Uset^q$, the reduced link function
  $\tilde\psi^\theta_u$ is continuous on $\Xset^p$.
\end{hyp}

Obviously, under~\ref{assum:bound:rho:gen}, for all
$\theta\in\Theta$ and  $u\in\Uset^{\zsetn}$, the asymptotic
behavior of $\tf{\chunk u{(-n)}0}(z)$ as $n\to\infty$ does not depend
on $z\in\Zset$. We can thus denote
\begin{align}
  \label{eq:definition:f:asalimit:with:domain}
\begin{cases}
\mathrm{D}^\theta&:=\set{u\in\Uset^{\zsetn}}{\tf{\chunk
  u{(-n)}0}(z)\text{ converges in $\Xset$ as }n\to\infty}\\
\displaystyle  \tf[\theta]{u}&:=\lim_{n\to\infty}\tf{\chunk u{(-n)}0}(z)\text{ for
                 all }u\in\mathrm{D}^\theta\;,
             \end{cases}
\end{align}
and keep in mind that the initial point $z$ has no influence on these
two definitions.

By~(\ref{eq:psi:Psi:relation:recip}), we
further have the following result using the
definitions in~(\ref{eq:definition:f:asalimit:with:domain}).
\begin{align}
  \label{eq:definition:fF:asalimit:with:domain}
\begin{cases}
\mathrm{D}^\theta&:=\set{u\in\Uset^{\zsetn}}{\tF{\chunk
  u{(-n)}0}(z)\text{ converges in $\Zset$ as }n\to\infty}\\
\displaystyle  \tF[\theta]{u}&:=\lim_{n\to\infty}\tF{\chunk u{(-n)}0}(z)\text{ for
                 all }u\in\mathrm{D}^\theta\;,
             \end{cases}
\end{align}
and $\tF[\theta]{u}$ and $\tf{u}$ are related for all
$u\in\mathrm{D}^\theta$  through the formulas
\begin{align}
\label{eq:psi:Psi:relation;infty}
\tf{u}&=\Proj p\circ\tF{u} \;,\\
\label{eq:psi:Psi:relation:recip:infty}
  \tF[\theta]{u}&=\left(\left(\tf[\theta]{\chunk
        u{(-\infty)}{k}}\right)_{-p< k\leq0},\chunk u{(-q+2)}0\right)\;.
\end{align}
Based on these definitions, we now introduce
subsets of $\Zset$ of particular interest.
\begin{definition}[Set $\mathrm{E}^\theta$]
  \label{def:E-sets}
If  Assumption~\ref{assum:bound:rho:gen} holds, we set, for any
$\theta\in\Theta$,
\begin{equation}
  \label{eq:E-theta-def}
\mathrm{E}^\theta\eqdef\set{\tF{u}}{u\in\mathrm{D}^\theta}\subset\Zset\;,
\end{equation}
where $\tF[\theta]{\cdot}$ and $\mathrm{D}^\theta$ are defined
by~(\ref{eq:definition:fF:asalimit:with:domain}).
\end{definition}
\begin{remark}\label{rem:E-theta-Zk}
Suppose that, for all
$\theta\in\Theta$, we have $\chunk U{(-\infty)}0\in\mathrm{D}^\theta$,
$\tilde\PP^\theta\as$, and suppose that~\ref{item:X:from:Y:determ}
holds for $\tf[\theta]{\cdot}$ as
in~(\ref{eq:definition:f:asalimit:with:domain}). Then, 
by~(\ref{eq:Zk:def}) and~(\ref{eq:psi:Psi:relation:recip:infty}), we
have $Z_1\in\mathrm{E}^\theta$, $\PP^\theta\as$ Since $\PP^\theta$ is
shift-invariant, we get that $\set{Z_k}{k\in\zset}$ takes its values
in $\mathrm{E}^\theta$,  $\PP^\theta\as$ This is why the set
$\mathrm{E}^\theta$ will be of interest in the following. 
\end{remark}
The following result is proved in \autoref{sec:proof-XY-cond}.
\begin{lemma}
  \label{lem:Lip:XY-cond}
Consider an ODM$(p,q)$ satisfying
\ref{assum:gen:identif:unique:pi:gen} with $p,q\in\zsetpnz$ or an  ODMX$(p,q,r)$ satisfying
\ref{assum:gen:identif:unique:pi:gen:X} with $p,q,r\in\zsetpnz$.
Suppose that \ref{item:CCMShyp},~\ref{assum:bound:rho:gen}
and~\ref{item:ident-cond-log-moment} hold. Then, for all $\theta,\thv\in\Theta$,
we have $\chunk U{(-\infty)}{0}\in\mathrm{D}^\theta$,
$\tilde\PP^{\thv}\as$, \ref{item:X:from:Y:determ} holds and, setting
$\mathrm{E}^\thv$ as in \autoref{def:E-sets}, we have
\begin{equation}
  \label{eq:ident_fundamental_E}
\set{\theta\in\Theta}{G^{\theta}=G^{\thv}\text{ and
    }\tilde{\Psi}^{\theta}_u(z)=\tilde{\Psi}^{\thv}_u(z)\text{ for all
    }(z,u)\in\mathrm{E}^\thv\times\Uset}\subseteq  [\thv]
  \;.
\end{equation}
If
moreover~\ref{item:continuity-in-x-for-u-given} is assumed,
then~(\ref{eq:identif-XYa-thv}) holds for all
$\theta\in\Theta$. Consequently, the set $\langle\thv\rangle$ in
\autoref{def:ident:theta:gen:od:link} can be expressed as
\begin{equation}
    \label{eq:to-prove-deidentif:def-angle-set-limit-case}
\langle\thv\rangle=\set{\theta\in\Theta}{\tf[\theta]{\chunk{U}{(-\infty)}{0}}=\tf[\thv]{\chunk{U}{(-\infty)}{0}} \quad
    \tilde{\PP}^\thv\as}\;.
\end{equation}
\end{lemma}
\begin{remark}
  \label{rem:ergod-assumpt-invert}
  The invertibility Assumption~\ref{item:X:from:Y:determ} is
    essential for deriving the identifiability class $[\thv]$
    using the set $\langle\thv\rangle$. \autoref{lem:Lip:XY-cond} can
    be used to prove it in all the examples that are considered
    hereafter. Indeed as will be checked in \autoref{sec:application},
    all the considered examples satify the following facts:
  \begin{enumerate}[label=(\arabic*)]
  \item The sets $\Xset$ and $\Uset$ are closed
    subsets of finite dimensional normed spaces and \ref{item:CCMShyp}
    follows.
  \item \label{item:lyapunovVSinvert}
    Assumption~\ref{assum:bound:rho:gen} is weaker than what
    is needed for proving the ergodicity
    assumption~\ref{assum:gen:identif:unique:pi:gen}. Consider for
    instance the classical GARCH(1,1) model defined by setting
    $\Upsilon(y)=y^2$, $\tilde\psi^\theta_u(x)=\omega+ax+bu$ and
    $G^\theta(x,\cdot)=\PP(x\varepsilon\in\cdot)$ where $\varepsilon$ is
    centered with variance 1. Then it is easily seen
    that~\ref{assum:bound:rho:gen} is equivalent to $a<1$. On the
    other hand, the Lyapunov condition to
    get~\ref{assum:gen:identif:unique:pi:gen} reads
    $\mathbb{E}\log(b\epsilon^2+a)<0$, which implies $a<1$.
  \item The moment condition \ref{item:ident-cond-log-moment} is
    implied by
    $\mathbb{E}^\thv\left[\log^+(|Y_0|)\right]<\infty$, where
    $|\cdot|$ is some norm, and this condition  holds as
    a byproduct of the proof
    of~\ref{assum:gen:identif:unique:pi:gen} (which often imply
    $\mathbb{E}^\thv\left[|Y_0|^s\right]$ for some $s>0$).
  \item One can readily checks~\ref{item:continuity-in-x-for-u-given}.
  \end{enumerate}
  Note also that the set in the left-hand side
  of~(\ref{eq:ident_fundamental_E}) contains the set in the left-hand side
  of~(\ref{eq:ident_fundamental}). In all our examples, the
  assumptions of~\ref{thm:the-main-result-general-setting} below will be
  shown to hold, implying that the inclusion
  in~(\ref{eq:ident_fundamental_E}) is in fact an equality.
  In some of these examples, however, the inclusion in~(\ref{eq:ident_fundamental})
  is strict, showing that the sets in the left-hand sides
  of~(\ref{eq:ident_fundamental}) and~(\ref{eq:ident_fundamental_E})
  may happen to be different.  
\end{remark}

\section{Examples}\label{sec:examples}
We give a non-exhaustive list of possible examples related to the
previous definitions and for which our results apply, as will be shown
in \autoref{sec:application}.
\subsection{Standard LODMs}\label{sec:standard-lodm}
Many models can be considered as an LODM by choosing an
appropriate admissible mapping $\Upsilon$.

\noindent\textbf{GARCH}.
   The standard GARCH$(p,q)$ model is a special case of
  LODM$(p, q)$, in which case $\Xset=\rsetp$, $\Yset=\rset$,
  $\Upsilon(y)=y^2$, and $G^\theta(x,\cdot)$ is a centered distribution with
  variance $x$, most commonly the normal distribution.

\noindent\textbf{INGARCH}.
 The standard
  Poisson integer-valued GARCH (INGARCH, see
  e.g. \cite{ferland:latour:oraichi:2006}) obviously is an  LODM$(p,
  q)$ with $\Xset=\rsetp$, $\Yset=\zsetp$ and $G^\theta(x,\cdot)$ is
  the Poisson distribution with mean $x$.

\noindent\textbf{Extensions} of INGARCH.
Many extensions of the INGARCH model simply consist in extending
the Poisson distribution to more general ones: the NBIN-GARCH
model of \cite{zhu:2011}, the COM-Poisson INGARCH model of \cite{zhu-com:2012}, the
zero-inflated Poisson GARCH of \cite{zhu-zero:2012}, or the mixed-Poisson
integer GARCH (MPINGARCH) of \cite{silva19}, among
others. Often for these extensions, an extra-parameter is used to
define the distribution $G^\theta(x,\cdot)$, in which case this extra
parameter can be taken either as known, in which case $G^\theta$ does
not depend on $\theta$, or as unknown, in which case $G^\theta$ only depends on a
subparamater of $\theta$.  Some integer valued observation driven models require
using a non-identity admissible mapping in order to be seen as an
LODM. For instance, the log-linear Poisson Garch model of
\cite{fokianos:tjostheim:2011} is an LODM$(p, q)$ by taking
$\Upsilon(y)=\ln(1+y)$, and $G^\theta(x,\cdot)$ as the Poisson
distribution with mean $\rme^x$.

All the above examples are LODMs with a similar parametrization of the
linear link function. In fact they only differ through the admissible
mapping $\Upsilon$ or the observation kernel $G^\theta$. We assemble
them using the following definition.  
\begin{definition}[Standard LODM (with unknown observation kernel)]
  \label{def:standard-lodms} An LODM($p,q$)
  of \autoref{def:obs-driv-gen:vlodm} is said to be \emph{standard} if
  $\theta=(\omega,\chunk a1p,\chunk b1q)\in\Theta\subset\rset^{1+p+q}$
  with $\boldsymbol{\omega}(\theta)=\omega$, $A_k(\theta)=a_k$ for all
  $1\leq k\leq p$ and $B_k(\theta)=b_k$ for all $1\leq k\leq q$, and
  $G^\theta$ does not depend on $\theta$, in which case we denote it
  by $G$. It is said to be standard \emph{with unknown observation kernel} if
  the same holds with
  $\theta=(\vartheta,\varphi)\in\Theta\subset\rset^{1+p+q}\times\Phi$
  where $\vartheta=(\omega,\chunk a1p,\chunk b1q)$ and $\Phi$ is some
  parameter set, and $G^\theta$ only depends on $\varphi$, in which
  case we denote it by $G^{\varphi}$.
\end{definition}
In this definition the parameter $\varphi$ is used in the case where
the observation kernel depends on an unknown extra parameter, as
considered in \cite{silva19} for the class of MPINGARCH($p,q$) models
which include the NBIN GARCH model.  A necessary and sufficient
condition for standard LODMs with known or unknown observation kernel
is provided in \autoref{thm:gener-ident-cond} below and applies to all
the examples listed in this section.

  \subsection{A bivariate example}\label{sec:bivariate-example}
Let us extend \autoref{def:standard-lodms} to the vector case as follows.
\begin{definition}[Standard VLODM (with unknown observation kernel)]
  \label{def:standard-vlodms} A VLODM($p,q,p',q'$) is said to be \emph{standard} if
  $\theta=(\boldsymbol{\omega},\chunk A1p,\chunk
  B1q)\in\Theta\subset\rset^{p'}\times\left(\rset^{p'\times p'}\right)^p\times\left(\rset^{p'\times q'}\right)^q$
  with $\boldsymbol{\omega}(\theta)=\omega$, $A_k(\theta)=A_k$ for all
  $1\leq k\leq p$ and $B_k(\theta)=B_k$ for all $1\leq k\leq q$, and
  $G^\theta$ does not depend on $\theta$, in which case we denote it
  by $G$. It is said to be standard \emph{with unknown observation kernel} if
  the same holds with
  $\theta=(\vartheta,\varphi)\in\Theta\subset \rset^{p'}\times\left(\rset^{p'\times p'}\right)^p\times\left(\rset^{p'\times q'}\right)^q
  \times\Phi$
  where $\vartheta=(\boldsymbol{\omega},\chunk A1p,\chunk B1q)$ and $\Phi$ is some
  parameter set, and $G^\theta$ only depends on $\varphi$, in which
  case we denote it by $G^{\varphi}$.
\end{definition}
Then the bivariate integer valued GARCH model of \cite{cui-zhu-2018}
is a \emph{standard} VLODM($1,1,2,2$) with unknown observation kernel
defined for all $\varphi\in\Phi=[-\overline{\varphi},\overline{\varphi}]$ (where
$\overline{\varphi}>0$ is some constant), $(x_1,x_2)\in\rsetpnz$ and
$(y_1,y_2)\in\zsetp$, by
 \begin{equation}
  \label{eq:bivariate-poisson}
 G^{\varphi}((x_1,x_2),\{y_1,y_2\})=\frac{x_1^{y_1}x_2^{y_2}}{y_1!\,y_2!}\rme^{-(x_1-x_2)}\;\left(1+\varphi\prod_{i=1,2}(\rme^{-y_i}-\rme^{cx_i})\right)\;.
\end{equation}
where $c=1-1/\rme$. Since we have $p=q=1$ in this example, we simply
denote
$\theta=(\boldsymbol{\omega},A,B,\varphi)\subset\rset^2\times\rset^{2\times2}\times\rset^{2\times2}\times[-\overline{\varphi},\overline{\varphi}]$.

\subsection{Non-linear GARCH}\label{sec:non-linear-garch}
The non-linear GARCH model of \cite{hamadeh-zakoian:jspi2011} is an
ODM($p,q$) with
\begin{align*}
  G^\theta(x,\cdot)&= \PP(x^{1/\delta}\eta\in\cdot ) \\
  \psi^\theta_{\chunk{y}{0}{(q-1)}}(\chunk{x}{0}{(p-1)})&= \omega + \sum_{i=1}^{p} a_i x_{p-i}+ \sum_{i=1}^{q} \left(b_{i}(1) (y_{q-i}^+)^\delta + b_{i}(2) (y_{q-i}^-)^\delta \right)  \; ,
\end{align*}
where $\eta$ is a real valued random variable.  Two cases
are considered in  \cite{hamadeh-zakoian:jspi2011} :
\begin{enumerate}[label=Case \arabic*)]
\item\label{item:non-linear-case1} If the exponent $\delta$ is known, we set
$\theta=(\omega,\chunk{a}{1}{p},\chunk{\mathbf{b}}{1}{q})$ with
$\mathbf{b}_k=\begin{bmatrix}b_k(1)&b_k(2)\end{bmatrix}$ for $k=1,\dots,q$,
and
$\Upsilon(y)=( (y^+)^\delta,(y^-)^\delta)$, in which case we have a
standard VLODM$(p,q,1,2)$ of \autoref{def:standard-vlodms} with known observation kernel and with the
parameters $A_k$ denoted by $a_k$ for $k=1,\dots,p$ and the parameters
$B_k$ denoted by
$\mathbf{b}_k$ for $k=1,\dots,q$.
\item\label{item:non-linear-case2} If the exponent $\delta$ is unknown, we set
  $\theta=(\omega,\chunk{a}{1}{p},\chunk{\mathbf{b}}{1}{q},\delta)$
  and $\Upsilon(y)=( y^+,y^-)$, in which case $\delta$ must be
  included in the definition of $\tilde \psi^\theta$.
\end{enumerate}
To our best knowledge this
kind of model have not be extended to the case of (signed) integer
valued time series.

\subsection{The SETPAR  model}\label{sec:other-non-linear}
Other non-linear ODM's that cannot be cast into an LODM or a VLODM can
be found in \cite{christou-fkianos2015count}. We consider here
the \emph{self-excited threshold Poisson autoregression} (SETPAR) model originally studied in
\cite{wang2014self}, which is an ODM(1,1), integer valued
($\Yset=\zsetp$), with link function defined for all
$\theta=(\omega_1,\omega_2,a_1,a_2,b_1,b_2,r)\in\Theta\subset\rsetp^6\times\zsetp$ by
\begin{equation}
  \label{eq:tpar-link}
\psi^\theta_{y}(x)=
\begin{cases}
  \omega_1 +  a_1 x+ b_1 y&\text{ if $y\leq r$}  \\
  \omega_2 +  a_2 x+ b_2 y&\text{ if $y> r$}        \; ,
\end{cases}
\end{equation}
with
$G^\theta(x,\cdot)$ being the usual Poisson distribution with mean
$x$.
\subsection{The PARX model}\label{sec:parx-model}
 Our last example is the Poisson autoregression with exogenous
 covariates (PARX) model of \cite{AGOSTO2016640}. The PARX model is
 similar to the standard INGARCH($p,q$) model above but with
 additional exogenous variables entering into the link function
 for generating the hidden variables. The exogenous variables are assumed to satisfy some Markov dynamic of
 order 1  (see \cite[Assumption~1]{AGOSTO2016640}).
 Thus it is an ODMX($p,q,1$). Eq.~(1) in \cite{AGOSTO2016640}
 corresponds to setting our $G^\theta(x,\cdot)$ as the Poisson
 distribution with mean $x$. Eq.~(2) in
 \cite{AGOSTO2016640} corresponds to setting for all $x=
 \chunk x0{(p-1)}\in\rset^p$ and $u=(\chunk y0{(q-1)},v)\in\rset^q\times\Vset$,
  \begin{equation}
   \label{eq:Parx-link-general}
 \tilde{\psi}_u(x)=\omega+\sum_{k=1}^pa_k x_{p-k}+\sum_{k=1}^qb_k y_{q-k}+f(v,\gamma)\;,
\end{equation}
where $f(\cdot,\gamma):\Vset\to\rsetp$ is a known function and
 $\theta=(\omega,a_1,\dots,a_p,b_1,\dots,b_q,\gamma)$ is the unknown
 parameter of the model. Note that our $Y_k$, $X_k$, $V_k$,
 $\chunk a1p$ and $\chunk b1q$ correspond to their $y_k$, $\lambda_k$,
 $x_k$, $\chunk \beta1q$ and $\chunk \alpha1p$,
 respectively. Identifiability is considered in \cite{AGOSTO2016640}
 by specifying $\gamma$ as $\gamma=\chunk \gamma1d\in\rsetp^d$ for
 some positive integer $d$ (which corresponds to $d_x$ in
 \cite{AGOSTO2016640}) and $f(v,\gamma)$ as being of the form
 \begin{equation}
   \label{eq:Parx-f-specified}
 f(v,\gamma)=\sum_{i=1}^d\gamma_if_i(v)\;,
 \end{equation}
 for some known functions $f_1,\dots,f_d:\Vset\to\rsetp$. It is in
 fact imposed in \cite{AGOSTO2016640} that
 $v=\chunk v1d\in\rset^d=\Vset$ and $f_i(v)$ actually is a function of
 $v_i$ for each $i\in\{1,\dots,d\}$ but this constraint can be dropped for
 achieving wider generality without additional theoretical
 difficulties.  The specific form of $f(v,\gamma)$
 in~(\ref{eq:Parx-f-specified}) amounts in our setting to specify the
 previous ODMX($p,q,1)$ with reduced link
 function as in~(\ref{eq:Parx-link-general}) to a VLODMX($p,q,1,1,d+1$) with
 $\Upsilon(y,v)=(y,f_1(v),\dots,f_d(v))\in\Uset=\rset^{1+d}$,
 $A_k(\theta)= a_k$ for $k=1,\dots,p$,
 $B_1(\theta)=\begin{bmatrix} b_1&\gamma_1&\dots&\gamma_d
 \end{bmatrix}$
and $B_k(\theta)= \begin{bmatrix}
   b_k&0&\dots&0
 \end{bmatrix}
 $ for $k=2,\dots,q$.  Then
 $\theta=(\omega,a_1,\dots,a_p,b_1,\dots,b_q,\gamma)$ with
 $\gamma=\chunk\gamma1d\in\rsetp^d$ and it follows that $\Theta$ is a
 subset of $\rsetp^{1+p+q+d}$.

\section{Main results}\label{sec:main-results}

\subsection{General setting}
To investigate the identifiability of the model, we first introduce an
assumption which says how much can be identified from a single observation of the
conditional distribution $G^\theta(x,\cdot)$.
  \begin{hyp}{B}
\item \label{item:ident:G:gen:od:eq}   For all $\thv\in\Theta$ there
  exists $[\thv]_G\subset\Theta$ such that, for all $\theta\in\Theta$ and $x,x'\in\Xset$,
 \begin{equation*}
 G^\theta(x;\cdot) = G^{\thv}(x';\cdot)\quad\text{if and only if}\quad \theta\in[\thv]_G \quad\text{and}\quad x=x'\;.
 \end{equation*}
\end{hyp}
It can be convenient to write the parameters as
$\theta=(\vartheta,\varphi)$ so that $G^\theta$ only depends on
$\varphi$, hence can be denoted by $G^\varphi$, and the link function
$\psi^\theta$ only depends on $\vartheta$, hence can be denoted by
$\psi^\vartheta$.  In this case, the ``if''
in~\ref{item:ident:G:gen:od:eq} holds by setting
$[\thv]_G=\set{(\vartheta,\varphi)\in\Theta}{\varphi=\varphi_\star}$
for $\thv=(\vartheta_\star, \varphi_\star)$, and the ``only if''
in~\ref{item:ident:G:gen:od:eq} says that
$(\varphi,x)\mapsto G^\varphi(x,\cdot)$ is one-to-one. In many
examples $G^\theta$ does not depend on $\theta$ at all, in which case
$[\thv]_G=\Theta$. See \ref{item:ident:standard:G:unknown} below in
\autoref{sec:standard-lodms-appli} for such a case.

Our approach to establish identifiability is given by the following general result.
\begin{proposition}\label{thm:strong:identifiability:gen:od}
Consider an ODM$(p,q)$ satisfying
\ref{assum:gen:identif:unique:pi:gen} with $p,q\in\zsetpnz$ or an  ODMX$(p,q,r)$ satisfying
\ref{assum:gen:identif:unique:pi:gen:X} with $p,q,r\in\zsetpnz$.
 Let
  $\set{\tf[\theta]{\cdot}}{\theta\in\Theta}$ be a class of
  $\Uset^{\zsetn}\to\Xset$-measurable functions
  satisfying~\ref{item:X:from:Y:determ}. Suppose moreover
  that~\ref{item:ident:G:gen:od:eq} holds.  Then, for all
  $\thv\in\Theta$.  we have
  $$
  [\thv]=[\thv]_G\,\cap\,\langle\thv\rangle\;,
  $$
  where $\langle\thv\rangle$,  $[\thv]_G$ and  $[\thv]$ are
  respectively defined in \autoref{def:ident:theta:gen:od:link},
  Assumption~\ref{item:ident:G:gen:od:eq} and
  \autoref{def:equi:theta:gen:od}.
\end{proposition}
The proof is postponed to \autoref{sec:proof-converse-inclusion} for
convenience. We now derive the main result of this section, which provides
sufficient conditions in order to fully describe the set
$\langle\thv\rangle$. To this end, we introduce the following
assumption, in which, by saying that a probability measure $\mu$ on $(\Uset,\Usigma)$ is
\emph{non-degenerate} with respect to the class
$\mathcal{C}\subset\Usigma$, we mean that, for any $A\in\mathcal{C}$,
$\mu(A)=1$ can only be true if $A=\Uset$.
\begin{hyp}{A}
\item\label{item:ident-nu-nondegenerate} For all $\theta\in\Theta$ and
  $x\in\Xset$, the measure $\tilde G^\theta(x;\cdot)$ defined
  by~(\ref{eq:def:tildeG}) on
  $(\Uset,\Usigma)$ is non-degenerate with respect to the class
  $\mathcal{C}^\theta$,
\end{hyp}
where $\mathcal{C}^\theta$ denotes the class containing all sets $A\in\Usigma$ for which there exist
  $\theta,\theta'\in\Theta$, $z\in\Zset$, $k,l\in\zsetp$, and
  $v,w\in\Uset^k\times\Uset^l$ such that
$$
A=\set{u\in\Uset}{\tf{(v,u,w)}(z)=\tf[\theta']{(v,u,w)}(z)=0}\;.
$$
\begin{remark}\label{rem:ident-non-degenerate}
  The non-degenerate assumption~\ref{item:ident-nu-nondegenerate} is
  easy to check in the two following cases.
\begin{enumerate}[label=(\arabic*)]
\item\label{item:rem:ident-non-degenerate1} If for all
  $\theta\in\Theta$, $x\in\Xset$ and $u\in\Uset$, we have
  $\tilde G^\theta(x,\{u\})>0$, then for any set $A\in\Usigma$ we have
  $\tilde G^\theta(x,A)=1$ if and only if $A=\Uset$. Thus
  \ref{item:ident-nu-nondegenerate} is immediately satisfied.
\item\label{item:rem:ident-non-degenerate2} In the VLODM case, that
  is, with reduced link function given
  by~(\ref{eq:tilde-psi:affine:vector:case}), we immediately see that
  $\mathcal{C}^\theta$ only contains affine subsets (being the null
  space of an affine function). Hence we only need to require that
  $\tilde G^\theta(x;\cdot)$ does not have full measure on affine
  hyperplanes to ensure that it is non-degenerate with respect to the class
  $\mathcal{C}^\theta$.
\end{enumerate}
\end{remark}
In the case of an ODMX, the definition of $\tilde
G^\theta$ is different, see Remark~\ref{rem:rem-after-def}\ref{item:rem-item:tildeG},
Assumption~\ref{item:ident-nu-nondegenerate}
has to be adapted into the following.
\begin{addonehypprim}{A}
\item \label{item:ident-nu-nondegenerate-X}
For all $\theta\in\Theta$ and
  $w\in\Xset\times\Vset^r$, the measure $\tilde G^\theta(w;\cdot)$ defined
  by~(\ref{eq:def:tildeG:X}) on
  $(\Uset,\Usigma)$ is non-degenerate with respect to the class
  $\mathcal{C}^\theta$.
\end{addonehypprim}
We have the following result.
\begin{theorem}\label{thm:general-setting-main-result}
  Consider an ODM$(p,q)$ satisfying
  \ref{assum:gen:identif:unique:pi:gen} and \ref{item:ident-nu-nondegenerate} with $p,q\in\zsetpnz$ or an
  ODMX$(p,q,r)$ satisfying \ref{assum:gen:identif:unique:pi:gen:X} and \ref{item:ident-nu-nondegenerate-X}
  with $p,q,r\in\zsetpnz$.  Assume that
  \ref{item:CCMShyp}--\ref{item:continuity-in-x-for-u-given} hold. For all $\theta\in\Theta$,
  define $\mathrm{D}^\theta$ and $\tf{\cdot}$
  by~(\ref{eq:definition:f:asalimit:with:domain}).
  Then \ref{item:X:from:Y:determ} holds and we have, for all
  $\thv\in\Theta$,
  \begin{equation}
    \label{eq:thm-ident-general-odm}
  \langle\thv\rangle=\set{\theta\in\Theta}{\tilde\Psi^\theta_u(z)=\tilde\Psi^{\thv}_u(z)\text{
    for all }(z,u)\in \mathrm{E}^\thv\times\Uset}\;,
\end{equation}
where $\langle\thv\rangle$ and $\mathrm{E}^\thv$ are as in
\autoref{def:ident:theta:gen:od:link} and \autoref{def:E-sets}.
 \end{theorem}
 \begin{proof} The fact that \ref{item:X:from:Y:determ} holds for the
   given choice of $\tf{\cdot}$ follows from \autoref{lem:Lip:XY-cond}.

   Let us now take $\thv\in\Theta$ and
   $\theta\in\langle\thv\rangle$ and show that $\theta$ belongs to the
   right-hand side of~(\ref{eq:thm-ident-general-odm}).
We prove this in the case of an ODMX
satisfying~\ref{assum:gen:identif:unique:pi:gen:X}.(The case of an
ODM is readily obtained by removing the variables $V_k$'s in the
reasoning).
   By~(\ref{eq:identif-XY}),
   (\ref{eq:to-prove-deidentif:gen:od})
   and~(\ref{eq:identif-XYa-thv}), and since $\PP^\thv$ is stationary,
   we
   have, for all $t\in\zset$,
  $$
  X_{t+1} =\tilde \psi^{\thv}_{\chunk U{(t-q+1)}{t}}\left(\chunk
    X{(t-p+1)}{t}\right)= \tilde \psi^{\theta}_{\chunk
    U{(t-q+1)}{t}}\left(\chunk X{(t-p+1)}{t}\right)
  \qquad \PP^\thv\as
  $$
  Using (\ref{eq:Zk:def}) and the notation
  introduced in \autoref{sec:iter-link-funct}, we get that, for any
  $n\in\zsetp$,
  \begin{equation}
    \label{eq:iterative-theta-thv}
    \tf[\thv]{\chunk U0n}(Z_0)=    \tf[\theta]{\chunk U0n}(Z_0)
  \qquad \PP^\thv\as
\end{equation}
We now show that this implies
\begin{enumerate}[label=(H$_k$)]
\item For all $u\in\Uset^k$, we have $\displaystyle
  \tf[\thv]{(\chunk U0{(n-k)},u)}(Z_0)=
        \tf[\theta]{(\chunk U0{(n-k)},u)}(Z_0)$
  $\PP^\thv\as$
\end{enumerate}
by iterative reasoning on $k=0,\dots,n+1$. First observe
that~(\ref{eq:iterative-theta-thv}) corresponds to H$_0$ (since $u$ is
an empty sequence). Now assume that   H$_k$ holds for some
$k=0,\dots,n$. Then, for any $v\in\Uset^k$, the set
$$
A:=\set{u\in\Uset}{  \tf[\thv]{(\chunk U0{(n-k-1)},u,v)}(Z_0)=
  \tf[\theta]{(\chunk U0{(n-k)},u,v)}(Z_0)}
$$
has probability 1 under the $\PP^\thv$-conditional probability of
$U_{n-k}$ given $Z_0,\chunk U0{(n-k-1)},\chunk
V{(-\infty)}{(n-k-1)}$. This conditional probability is
$\tilde G^{\thv}((X_{n-k},\chunk
V{(n-k-r)}{(n-k-1)});\cdot)$ defined
by~(\ref{eq:def:tildeG:X}). By~\ref{item:ident-nu-nondegenerate-X} and
since, given $Z_0,\chunk U0{(n-k-1)},\chunk
V{(-\infty)}{(n-k-1)}$, we have $A\in\mathcal{C}^\theta$,
$\PP^\thv\as$, we obtain that H$_{k+1}$ is true. Reasoning by
induction, this leads to H$_{n+1}$, and finally, we
get that
\begin{enumerate}[label=(H)]
\item there exists $z\in\Zset$ such that for all $n\in\zsetp$ and all
$u\in\Uset^{n+1}$,
$\tf[\thv]{u}(z)=\tf[\theta]{u}(z)$.
\end{enumerate}
Now let $(\initmle,u_1)\in \mathrm{E}^\thv\times\Uset$. By definition of
$\mathrm{E}^\thv$, there exists $u\in\Uset^{\zsetn}$, such that $\initmle=\tF[\thv]{u}$.
Now, for all $n\geq p$, we have
\begin{align*}
\tilde\Psi^\theta_{u_1}\left(\tF{\chunk u{(-n)}0}(z)\right)&=
                                              \tF{\chunk u{(-n)}1}(z)\\
  &=\tF[\thv]{\chunk u{(-n)}1}(z)\\
&=\tilde\Psi^\thv_{u_1}\left(\tF[\thv]{\chunk u{(-n)}0}(z)\right)\;,
\end{align*}
where we chose $z\in\Zset$ in order to apply Assertion~(H) in the
second equality.  On the other hand, by~(\ref{eq:definition:fF:asalimit:with:domain}), we have
$$
\initmle=\tF[\thv]{u}=\lim_{n\to\infty}\tF[\thv]{\chunk u{(-n)}0}(z)=\lim_{n\to\infty}\tF[\theta]{\chunk u{(-n)}0}(z)\;,
$$
where we again used that $z$ were chosen as in (H) in the last
equality.  With \ref{item:continuity-in-x-for-u-given} and the
previous display we obtain
$\tilde\Psi^\theta_{u_1}(\initmle)=\tilde\Psi^\thv_{u_1}(\initmle)$. This
is true for an arbitrary $(\initmle,u_1)\in \mathrm{E}^\thv\times\Uset$;
hence, we have obtained that the left-hand side
of~(\ref{eq:thm-ident-general-odm}) is included in its right-hand
side.

We now prove the opposite inclusion. Let $\thv,\theta\in\Theta$ such that
\begin{equation}
  \label{eq:theta-well-chosen-E}
\tilde\Psi^\theta_u(z)=\tilde\Psi^{\thv}_u(z)\qquad\text{for all}\quad
(z,u)\in\mathrm{E}^\thv\times\Uset\;.
\end{equation}
Let $v\in\Uset^{\zsetn}$.  Take an arbitrary
$\initmle\in\mathrm{E}^\thv$. Then there exists
$w\in\mathrm{D}^\theta$ such that $\initmle=\tF{w}$. For 
all  $n\in\zsetpnz$ and $k=1,\dots,n$, we get that
$$
\tF[\thv]{\chunk
  v{(-n+1)}{(-n+k)}}(\initmle)=\tF[\thv]{(w,\chunk
  v{(-n+1)}{(-n+k)})}\in\mathrm{E}^\thv\;.
$$
Applying~(\ref{eq:theta-well-chosen-E})
recursively in $k$, we get that, for any $n\in\zsetp$,
  $$
  \tF[\thv]{\chunk v{(-n)}0}(\initmle)=\tF[\theta]{\chunk v{(-n)}0}(\initmle)\;.
  $$
  Hence, $\mathrm{D}^\theta=\mathrm{D}^\thv$ and by~(\ref{eq:definition:fF:asalimit:with:domain})
  and~(\ref{eq:psi:Psi:relation;infty}), we get that for all
  $v\in\mathrm{D}^\theta=\mathrm{D}^\thv$,
  $\tf[\theta]{v}=\tf[\thv]{v}$. By \autoref{lem:Lip:XY-cond} we have
  $\chunk U{(-\infty)}0\in\mathrm{D}^\thv$, $\PP^\thv\as$, and
  using~(\ref{eq:to-prove-deidentif:def-angle-set-limit-case}), we get
  that $\theta\in\langle\thv\rangle$, which concludes the proof.
\end{proof}
Note that in~(\ref{eq:thm-ident-general-odm}), as in the left-hand
side of~(\ref{eq:ident_fundamental_E}), the functions
$\tilde\Psi^\theta_u$ and $\tilde\Psi^{\thv}_u$ are only required to
coincide on $\mathrm{E}^\thv$ whereas in the left-hand side
of~(\ref{eq:ident_fundamental}), they coincide
on the whole set $\Zset$. In some cases, we can prove that the two
conditions are the same, so that
\autoref{thm:strong:identifiability:gen:od} and
\autoref{thm:general-setting-main-result} allow us to conclude that
the inclusion in~(\ref{eq:ident_fundamental}) is in fact an equality,
as in the following result.
 \begin{corollary}\label{thm:the-main-result-general-setting}
   Suppose that the assumptions of
   \autoref{thm:general-setting-main-result} and
   \ref{item:ident:G:gen:od:eq} hold. Let $\thv\in\Theta$. Then the
   inclusion in~(\ref{eq:ident_fundamental_E}) is an equality. Suppose
   moreover that $\thv$
   satisfies the following additional assumption. 
   \begin{hyp}{A}
   \item \label{assum:fundamental-not-always-true-assumption}
For all $\theta\in\Theta$, $u\in\Uset$ and $z\in\Zset$,
   if $\tilde \Psi^\theta_u$ and $\tilde \Psi^\thv_u$ coincide on the set
   $\mathrm{E}^\thv(z):=\set{\tF[\thv]{v}(z)}{n\in\zsetp,\,v\in\Uset^n}$, then they also
   coincide on $\Zset$.
 \end{hyp}
 Then the inclusion
   in~(\ref{eq:ident_fundamental}) is an equality.
\end{corollary}
\begin{proof}
  Applying \autoref{thm:strong:identifiability:gen:od}
  and  \autoref{thm:general-setting-main-result}, we get that
  $$
  [\thv]=\set{\theta\in[\thv]_G}{{\tilde\Psi^\theta_u(z)=\tilde\Psi^{\thv}_u(z)\text{
    for all }(z,u)\in \mathrm{E}^\thv\times\Uset}}\;.
  $$
  Observing that, by~\ref{item:ident:G:gen:od:eq}, $\theta\in[\thv]_G$
  is equivalent to have $G^\theta=G^\thv$, we get that the
   inclusion in~(\ref{eq:ident_fundamental_E}) is an equality. 
  To prove the second assertion of the corollary, we only need to check
  that, under~\ref{assum:fundamental-not-always-true-assumption}, for all $\theta\in\Theta$ and $u\in \Uset$, if
  $\tilde\Psi^\theta_u$ and $\tilde\Psi^{\thv}_u$ coincide on
  $\mathrm{E}^\thv$, they must also coincide on
  $\Zset$. It suffices to
  show that there exists $z\in\Zset$ such
  that $\mathrm{E}^\thv(z)\subset\mathrm{E}^\thv$.
  This inclusion is true if
  $z\in\mathrm{E}^\thv$ and we conclude by observing that
  $\mathrm{E}^\thv$ is not empty since it contains $Z_1$,
  $\PP^\thv\as$, as a consequence of \autoref{rem:E-theta-Zk}. 
\end{proof}
\begin{remark}
  A simple case
  where~\ref{assum:fundamental-not-always-true-assumption} in
  \autoref{thm:the-main-result-general-setting} is easy to check is
  when $p=q=1$, so that $\Zset=\Xset$ and
  $\tF[\thv]{v}(z)=\tilde{\psi}^\thv_{v}(z)$ for all $v\in\Uset$ and
  $z\in\Xset$. See the proof of \autoref{thm:setpar} for a specific
  example.  However it may happen that
  \ref{assum:fundamental-not-always-true-assumption} is not satisfied
  as will be seen in
  \autoref{rem:standard-lodms}\ref{item:exple-strict-inclusion}. In the linear case, we will characterize
  $\langle\thv\rangle$ in \autoref{lem:vlodm:angle-set} without
  relying on  \ref{assum:fundamental-not-always-true-assumption}.
\end{remark}

\subsection{Vector linear setting}
\label{sec:vect-line-setting}
We now consider a VLODM($p,q,p',q'$) or a VLODMX($p,q,p',q',r$), that
is, we assume the reduced link function to be of the form~(\ref{eq:tilde-psi:affine:vector:case}). We set in this case
$\Xmet(x,x')=|x-x'|$ (resp. $\Umet(u,u')=|u-u'|$) where $|\cdot|$
denotes an arbitrary norm in $\rset^{p'}$ (resp. $\rset^{q'}$). The
general conditions reduce to the following set of conditions.
\begin{hyp}{L}
\item\label{item:vlodm-ident-cond-cond-dens-def} For all
  $\theta\in\Theta$, we have that $\mathrm{I}_{p'}
  -\sum_{k=1}^pA_k(\theta)z^k$
  is invertible for all
  $z\in\cset$ with $|z|\leq1$,
\end{hyp}
where $\mathrm{I}_{p'}$ denotes the identity matrix of order $p'$. 
\begin{hyp}{L}
\item\label{item:vlodm--non-degenerateident-G} For all $\theta\in\Theta$ and
  $x\in\Xset$, the measure $\tilde{G}^\theta(x;\cdot)$ defined on
  $(\Uset,\Usigma)$ by~(\ref{eq:def:tildeG}) is non-degenerate in
the following sense : there is no affine hyperplane
$A\subset\rset^{q'}$ such that $\tilde{G}^\theta(x;A)=1$.
\end{hyp}
Note that, if $q'=1$, affine hyperplanes are singletons, hence
\ref{item:vlodm--non-degenerateident-G} simply means that, for all $x$
and $\theta$,
$\tilde G^\theta(x;\cdot)$ does not reduce to a unit mass concentrated
on a single point.  In the case of a VLODMX,
we  replace \ref{item:vlodm--non-degenerateident-G}  by
the following.
\begin{addonehypprim}{L}
  \item\label{item:vlodm--non-degenerateident-G-X} For all $\theta\in\Theta$ and
  $w\in\Xset\times\Vset^r$, the measure $\tilde{G}^\theta(w;\cdot)$ defined on
  $(\Uset,\Usigma)$ by~(\ref{eq:def:tildeG:X}) is non-degenerate in
the following sense : there is no affine hyperplane
$A\subset\rset^{q'}$ such that $\tilde{G}^\theta(w;A)=1$.
\end{addonehypprim}
Finally the moment condition~\ref{item:ident-cond-log-moment}
simplifies in the vector linear case to
\begin{hyp}{L}
\item\label{item:vlodm-ident-cond-log-moment}
The  invariant probability measure of \autoref{def:ergo:theta:gen:od}
satisfies, for all $\theta\in\Theta$,
  \begin{equation}
    \label{eq:identif:abitofmoment}
\mathbb{E}^\theta\left[\ln^+ (|U_0|)\right]<\infty\;.
  \end{equation}
\end{hyp}
We have the following result, whose proof is postponed to
\autoref{sec:proof-lem:ergo:ass:1} for convenience, that relates this
set of assumptions to the general ones.
\begin{lemma}\label{lem:ergo:ass:1}
  Consider the vector linear setting
  where~(\ref{eq:tilde-psi:affine:vector:case}) holds, and $\Xset$ and
  $\Yset$ are closed subset of $\rset^{p'}$ and $\rset^{q'}$,
  respectively, with $\Xmet$ and $\Umet$ being the metrics induced by
  norms on these spaces. The following assertions hold.
  \begin{enumerate}[label=(\roman*)]
  \item\label{item:item:CCMShyp} Assumption~\ref{item:CCMShyp} holds.
  \item\label{item:assum:bound:rho:gen} Assumption~\ref{assum:bound:rho:gen} is equivalent
  to \ref{item:vlodm-ident-cond-cond-dens-def}.
\item\label{item:item:ident-cond-log-moment} Assumption
  \ref{item:vlodm-ident-cond-log-moment} implies
  \ref{item:ident-cond-log-moment} for any $\initmlex_1\in\Xset$, and any
  $\initmleu_1 \in\Uset$.
\item\label{item:item:continuity-in-x-for-u-given} Assumption~\ref{item:continuity-in-x-for-u-given} holds.
\item\label{item:item:ident-nu-nondegenerate}
  Assumption~\ref{item:vlodm--non-degenerateident-G} implies \ref{item:ident-nu-nondegenerate}.
\item\label{item:item:ident-nu-nondegenerate-X}
  Assumption~\ref{item:vlodm--non-degenerateident-G-X} implies \ref{item:ident-nu-nondegenerate-X}.
\end{enumerate}
As a consequence, the assumptions of
\autoref{thm:general-setting-main-result} are implied
by~\ref{assum:gen:identif:unique:pi:gen},~\ref{item:vlodm-ident-cond-cond-dens-def},~\ref{item:vlodm--non-degenerateident-G}
and~\ref{item:vlodm-ident-cond-log-moment} in the VLODM case and by ~\ref{assum:gen:identif:unique:pi:gen:X},~\ref{item:vlodm-ident-cond-cond-dens-def},~\ref{item:vlodm--non-degenerateident-G-X}
and~\ref{item:vlodm-ident-cond-log-moment} in the VLODMX case.
\end{lemma}
We now provides a simple characterization of $\langle\thv\rangle$
in~(\ref{eq:thm-ident-general-odm}) in the linear case. The proof of
the following result can be found
in \autoref{sec:proof-lem:vlodm:angle-set}.
\begin{lemma}\label{lem:vlodm:angle-set}
  Suppose that $\{0\}\subsetneq\Uset$ and that~\ref{item:vlodm-ident-cond-cond-dens-def} holds, and
  let $\mathrm{E}^\theta$ be as in
  \autoref{def:E-sets}.
For all $\theta\in\Theta$, define $\mathbf{R}(\cdot;\theta)$ as the rational matrix 
\begin{equation}
  \label{eq:RpDef}
\mathbf{R}(z;\theta)=\left(\mathrm{I}_{p'} z^p
  -\sum_{k=1}^pA_{k}(\theta)z^{p-k}\right)^{-1}
\left(\sum_{k=0}^{q-1} B_{k+1}(\theta)\ z^{q-1-k}\right)
\;,
\end{equation}
which is well defined on $z\in\cset$ except for at most finitely many
$z$'s.
  Then, for all $\theta,\thv\in\Theta$, the
  two following  assertions are equivalent.
  \begin{enumerate}[label=(\roman*)]
  \item   \label{item:theta-well-chosen-E-again} We have 
$\displaystyle\tilde\Psi^\theta_u(z)=\tilde\Psi^{\thv}_u(z)$ for all
$z\in\mathrm{E}^\thv$ and $u\in\Uset$. 
\item    \label{item:theta-well-chosen-E-CNS}  The two following identities hold
  \begin{align}
  \label{eq:vlodm:iden:omega-eq}
\left(\mathrm{I}_{p'}-\sum_{k=1}^pA_k(\thv)\right)^{-1}\boldsymbol{\omega}(\thv)
  &=
    \left(\mathrm{I}_{p'}-\sum_{k=1}^pA_k(\theta)\right)^{-1}\boldsymbol{\omega}(\theta)   \;,  \\
  \label{eq:vlodm:iden:R-eq}
 \mathbf{R}(\cdot;\thv) &=
                             \mathbf{R}(\cdot;\theta)  \;.
  \end{align}
  \end{enumerate}
\end{lemma}
The identification of a parameter $\theta$ based on the
equation~(\ref{eq:vlodm:iden:R-eq}) is similar to the
identifiability of a vector auto-regressive moving average or order $p,q$
(VARMA($p,q$)) model with AR matrices $\chunk
{A}1p(\theta)$ and MA matrices  $\chunk
B1q(\theta)$. Indeed, in such a model the spectral density matrix takes the form
$$
\lambda\mapsto
\mathbf{R}(\rme^{\rmi\lambda};\theta)\Sigma \mathbf{R}(\rme^{-\rmi\lambda};\theta)\;,
$$
where $\Sigma$ is the covariance matrix of the noise. We refer to
\cite{hannan_deistler2012} where identifiable parametrization of ARMA
models are discussed. Below we provide an important related result
related to this general issue.  Let $p,q,p',q'$ be positive integers.
For all $\chunk A1p\in(\rset^{p'\times p'})^p$ and $\chunk
B1q\in(\rset^{p'\times q'})^q$, let us define the polynomial matrices
respectively valued in $\rset^{p'\times p'}$ and $\rset^{p'\times
  q'}$
\begin{align}
  \label{eq:P-and-Q}
  \mathrm{P}_p(z;\chunk{A}1p)=  \mathrm{I}_{p'}z^p-\sum_{k=1}^pA_{k}z^{p-k} \quad\text{and}\quad
  \mathrm{Q}_q(z;\chunk{B}1q)=
\sum_{k=1}^{q} B_{k}\ z^{q-k}\;.
\end{align}
Note that, for all $\chunk A1p\in(\rset^{p'\times p'})^p$,
$\mathrm{P}_p(z;\chunk{A}1p)$ in~(\ref{eq:P-and-Q}) must be invertible
for $|z|$ large enough (since then $\mathrm{I}_{p'}z^p$ dominates).
When a polynomial matrix is invertible for at least one $z\in\cset$,
then it is invertible for all  $z\in\cset$, except at most a finite
number of them. It is then said to be \emph{non-singular}. Thus, for all $\chunk A1p\in(\rset^{p'\times p'})^p$ and $\chunk
B1q\in(\rset^{p'\times q'})^q$, we can define the rational matrix
$\mathrm{P}_p(z;\chunk{A}1p)^{-1} \mathrm{Q}_q(z;\chunk{B}1q)$,
which is well defined for  all  $z\in\cset$, except at most a finite
number of them.

\begin{lemma}\label{lem:canonical-vlodm}
 Let $p,q,p',q'$ be positive integers.
Then, for any   $\chunk{A^{\star}}1p\in(\rset^{p'\times p'})^p$ and $\chunk
 {B^{\star}}1q\in(\rset^{p'\times q'})^q$, the two following assertion holds.
    \begin{enumerate}[label=(\roman*)]
  \item\label{item:thm:gener-ident-cond-vlodm:cs} Suppose that
 $\mathrm{P}_p(\cdot;\chunk{A^{\star}}1p)$ and   $\mathrm{Q}_q(\cdot;\chunk{B^{\star}}1q)$
 are left coprime. Then, for all $\chunk A1p\in(\rset^{p'\times p'})^p$ and $\chunk
 B1q\in(\rset^{p'\times q'})^q$, we have
 $$
 \mathrm{P}_p(\cdot;\chunk{A^{\star}}1p)^{-1}\mathrm{Q}_q(\cdot;\chunk{B^{\star}}1q)=
 \mathrm{P}_p(\cdot;\chunk{A}1p)^{-1}\mathrm{Q}_q(\cdot;\chunk{B}1q)
 $$
 if and only if $\chunk A1p=\chunk{A^{\star}}1p$ and  $\chunk
 B1p=\chunk{B^{\star}}1p$.
\item \label{item:thm:gener-ident-cond-vlodm:cn} Suppose that
 $\mathrm{P}_p(\cdot;\chunk{A^{\star}}1p)$ and   $\mathrm{Q}_q(\cdot;\chunk{B^{\star}}1q)$
 are not left coprime. Then, there exist
 $\chunk {\tilde A}1p\in(\rset^{p'\times p'})^p\setminus\{0\}$ and
 $\chunk {\tilde B}1q\in(\rset^{p'\times q'})^q$
 such that, for all $\alpha\in\rset$, setting $\chunk{A}1p=\chunk{A^{\star}}1p+\alpha\chunk {\tilde
   A}1p$ and $\chunk
 {B}1q=\chunk
 {B^{\star}}1q+\alpha\chunk {\tilde B}1q$, we have
 $$
 \mathrm{P}_p(\cdot;\chunk{A^{\star}}1p)^{-1}\mathrm{Q}_q(\cdot;\chunk{B^{\star}}1q)=
 \mathrm{P}_p(\cdot;\chunk{A^{\star}}1p+\alpha\chunk {\tilde
   A}1p)^{-1}\mathrm{Q}_q(\cdot;\chunk
 {B^{\star}}1q+\alpha\chunk {\tilde B}1q)\;.
 $$
\end{enumerate}
\end{lemma}
Two polynomial matrices with the same number $p'$ of rows are said to be left coprime if they admit
the identity matrix $\mathrm{I}_{p'}$ as a greatest common $p'\times p'$ left divisor
(g.c.l.d.), that is, every common left divisor of them is also a left
divisor of $\mathrm{I}_{p'}$.  The set of polynomial matrices of order
$p'$ is a non-commutative ring for $p'>1$. This is why for $p'>1$ a
notion of \emph{left} (or right) divisor is necessary. Note, however
that if $p'=1$, saying that $\mathrm{P}_p(\cdot;\chunk{A^{\star}}1p)$ and
$\mathrm{Q}_q(\cdot;\chunk{B^{\star}}1q)$ are \emph{left} coprime is
equivalent to say that $\mathrm{P}_p(\cdot;\chunk{A^{\star}}1p)$ and the
$q'$ row entries of $\mathrm{Q}_q(\cdot;\chunk{B^{\star}}1q)$ (which is
$q'$-dimensional row vector of polynomials of degree at most $q$) are
coprime, that is, they have 1 as greater common divisor. In particular
if $p'=q'=1$, this boils down to say that
$\mathrm{P}_p(\cdot;\chunk{A^{\star}}1p)$ and
$\mathrm{Q}_q(\cdot;\chunk{B^{\star}}1q)$ have no common roots.  The case
$p'>1$ is significantly more complicated and we refer to
\cite[Chapter~III]{macduffee33} for an excellent introduction on
polynomials on Euclidean rings that applies to matrices of
polynomials.
\begin{proof}
 Let    $\chunk{A^{\star}}1p\in(\rset^{p'\times p'})^p$ and $\chunk
 {B^{\star}}1q\in(\rset^{p'\times q'})^q$. In this proof section, for
 convenience we denote $\mathrm{P}_p(\cdot;\chunk{A^{\star}}1p)$ and
 $\mathrm{Q}_q(\cdot;\chunk{B^{\star}}1q)$ by $P^{\star}$ and $Q^{\star}$.

\noindent\textbf{Proof of Assertion~\ref{item:thm:gener-ident-cond-vlodm:cs}.}
Suppose that $P^{\star}$ and $Q^{\star}$ are left coprime.
The Bezout theorem for matrices of polynomials (see
e.g. \cite[Theorem~3.1]{macduffee33}) gives that there exists two
polynomial matrices $R$ and $S$ of order $p'\times p'$ and  $q'\times
p'$ respectively, such that
\begin{equation}
  \label{eq:bezout}
\mathrm{I}_{p'}= P^{\star}R+Q^{\star}S\;.
\end{equation}
Let
$\chunk A1p\in(\rset^{p'\times p'})^p$ and
$\chunk B1q\in(\rset^{p'\times q'})^q$, and denote $P=\mathrm{P}_p(\cdot;\chunk{A^{\star}}1p)$ and
$Q=\mathrm{Q}_q(\cdot;\chunk{B^{\star}}1q)$. The ``if'' in
Assertion~\ref{item:thm:gener-ident-cond-vlodm:cs} is obvious, so we
only need to assume that
\begin{equation}
  \label{eq:cn-only-if}
 P^{-1}Q=P^{*-1}Q^{\star}
\end{equation}
and prove that $P=P^{\star}$
(in which case we also get that $Q=Q^{\star}$).
Define the rational matrix $U=PP^{*-1}$. Multiplying both sides
of~(\ref{eq:bezout}) by $U$ from the left, we have
$$
U= PR+PP^{*-1}Q^{\star}S=PR+P P^{-1}QS=PR+QS\;,
$$
where we used~(\ref{eq:cn-only-if}) in the second equality. Hence we
get that $U$ is a polynomial matrix and since $UP^{*}=P$ and both
$P^{*}$ and $P$ are of the form $\mathrm{I}_{p'}z^p+$ a polynomial of
degree at most $p-1$, we get that $U=\mathrm{I}_{p'}$ and so
$P^{*}=P$, which concludes the proof
of~\ref{item:thm:gener-ident-cond-vlodm:cs}.

\noindent\textbf{Proof of
  Assertion~\ref{item:thm:gener-ident-cond-vlodm:cn}.}
Suppose that $P^{\star}$ and $Q^{\star}$ are not left coprime.  Let $D$ be a
g.c.l.d. of $(P^{\star},Q^{\star})$.  Then $D$ is a polynomial matrix that
left-divides $P^{\star}$ and $Q^{\star}$ and changing this polynomial matrix won't
change the rational matrix $P^{*-1}Q^{\star}$. The difficulty is to show
that we can modify a left divisor of $P^{\star}$ and $Q^{\star}$ in such a way
that the resulting $P$ and $Q$ are still of the form~(\ref{eq:P-and-Q})
for some well chosen $\chunk A1p\in(\rset^{p'\times p'})^p$ and
$\chunk B1q\in(\rset^{p'\times q'})^q$. To this end we must first
choose $D$ in a special form. Indeed, for any \emph{unimodular}
polynomial matrix $U$, $DU$ is also a g.c.l.d. of $(P^{\star},Q^{\star})$. The
polynomial matrix $DU$ is called a right-associate of $D$, and by
\cite[Theorem~22.1]{macduffee33}, we can choose $U$ so that $DU$ is in
\emph{Hermite normal form}, that is such that $DU$ is triangular
inferior,
$$
DU=  \begin{bmatrix}
  h_{1,1}&0&\dots&0\\
  h_{2,1}&h_{2,2}&\ddots&\vdots\\
  \vdots&\ddots&\ddots&0\\
  h_{p',1}&\dots&\dots&h_{p',p'}
\end{bmatrix}
$$
with the polynomial $h_{i,i}$ is unitary with degree
$\mathrm{deg}(h_{i,i})$ strictly larger
than those of $h_{j,i}$ for all $j>i$ (that is, the degree on the diagonal
dominates those of the same column). Let
$$
k=\min\set{i=1,\dots,p'}{\mathrm{deg}(h_{i,i})>0}\;.
$$
From what precedes, this min exists (the set is not empty), otherwise
we would have $DU=\mathrm{I}_{p'}$ and $P^{\star}$ and $Q^{\star}$ would be left
coprime. We can thus write, for some $k\in\{1,\dots,p'\}$,
$$
DU=
\begin{bmatrix}
  \mathrm{I}_{k-1} &   \mathrm{0}_{k-1,p'-k+1} \\
  \mathrm{0}_{p'-k+1,k-1}
  & T
\end{bmatrix}\quad\text{with}\quad
T= \begin{bmatrix}
  h_{k,k}&0&\dots&0\\
    \vdots&\ddots&\ddots&\vdots\\
  \vdots&&\ddots&0\\
  h_{p',k}&\dots&\dots&h_{p',p'}
\end{bmatrix}
\;,
$$
where $\mathrm{0}_{k,\ell}$ is the zero matrix of size
$k\times\ell$. By convention, if $k=1$, $DU$ reduces to
$T$, that is back to its previous form. The important
point is that with this definition of $k$, we know that $h_{k,k}$ is
unitary with $\mathrm{deg}(h_{k,k})\geq1$. Now, since  $DU$ is a left divisor of
$P^{\star}$ and $Q^{\star}$ we may write
$$
P^{\star}=(DU)R\quad\text{and}\quad Q^{\star}=(DU)S
$$
for some matrices $R$ and $S$ of respective sizes $p'\times p'$ and
$p'\times q'$. We write $R$ and $S$ in a block matrix form compatible
with that of $DU$, that is
$$
R=\begin{bmatrix}
  R^{(1)}\\
    R^{(2)}
  \end{bmatrix}
  \quad\text{and}\quad
S=\begin{bmatrix}
  S^{(1)}\\
    S^{(2)}
  \end{bmatrix}
  $$
  with $R^{(1)}$ and $S^{(1,1)}$ of respective sizes
  $(k-1)\times(k-1)$ and $(k-1)\times q'$. (again with convention that
  these matrices vanish if $k=1$). Then we have
  $$
  P^{*}=\begin{bmatrix}
  R^{(1)}\\
    TR^{(2)}
  \end{bmatrix}
  \quad\text{and}\quad
Q=\begin{bmatrix}
  S^{(1)}\\
   T S^{(2)}
  \end{bmatrix}
  $$
  In particular the first row of $TR^{(2)}$ is $h_{k,k}$ times the
  first row of $R^{(2)}$, and since $h_{k,k}$ is unitary with
  $\mathrm{deg}(h_{k,k})\geq1$, the form of $P^{\star}$ implies that the
  first row of $R^{(2)}$ is made of polynomials of degrees at most
  $p-1$ and cannot be zero (since the degree of the $(k,k)$ entry row
  of $P^{\star}$ is exactly $p$ and all the other entries are
  zero). Similarly, the first row of $S^{(2)}$ is made of polynomials
  of degrees at most $q-2$ (since $Q^{\star}$ is of degree $q-1$).  Let
  $\Delta_k$ denote the diagonal matrix od order $p'$ with zeros on
  its diagonal except on the $k$-th entry where it is 1. Since $\Delta_kR$ and $\Delta_kS$
  only keeps the first rows of the block matrices $R^{(2)}$ and
  $S^{(2)}$, respectively, and put all other entries to zero, we get
  from what precedes that $\Delta_kR$ and $\Delta_kS$ are of degree at
  most $p-1$ and $q-1$, respectively, and that $\Delta_kR$ is not
  zero. Hence we may find
  $\chunk {\tilde A}1p\in(\rset^{p'\times p'})^p\setminus\{0\}$ and
  $\chunk {\tilde B}1q\in(\rset^{p'\times q'})^q$ (with
  $\tilde B_1=0$) such that
  $$
  \Delta_kR=\mathrm{Q}_p(z;\chunk{\tilde A}1p)
  \quad\text{and}\quad \Delta_kS=\mathrm{Q}_q(z;\chunk{\tilde B}1q)\;.
  $$
  Then, for all  $\alpha\in\rset$, setting
  $\chunk{A}1p=\chunk{A^{\star}}1p+\alpha\chunk {\tilde A}1p$ and
  $\chunk {B}1q=\chunk {B^{\star}}1q+\alpha\chunk {\tilde B}1q$, we have
$$
\mathrm{P}_p(z;\chunk{A}1p)=\mathrm{P}_p(z;\chunk{A^{\star}}1p)+\alpha
\Delta_kR = (DU+\alpha \Delta_k)R\;,
$$
and, similarly,
$$
\mathrm{Q}_p(z;\chunk{B}1p)=\mathrm{Q}_q(z;\chunk{B^{\star}}1q)+\alpha
\Delta_kS = (DU+\alpha \Delta_k) S\;.
$$
Then we get
\begin{align*}
 \mathrm{P}_p(\cdot;\chunk{A}1p)^{-1}\mathrm{Q}_q(\cdot;\chunk{B}1q)&=
\left((DU+\alpha \Delta_k)R\right)^{-1}(DU+\alpha \Delta_k)
                                                                      S=R^{-1}S\\
                                                                    &=\left(DUR\right)^{-1}DUS\\
  &=P^{*-1}Q^{\star}
\;,
\end{align*}
which concludes the proof of
Assertion~\ref{item:thm:gener-ident-cond-vlodm:cn}.
\end{proof}
\section{Applications}
\label{sec:application}
We now use our results to derive necessary and sufficient conditions for having
identifiability in  the examples of \autoref{sec:examples}. Many other
examples can be achieved by combining various observation kernels and
link function with or without exogenous covariates.

\subsection{Standard LODMs}
\label{sec:standard-lodms-appli}
Let us apply the results of \autoref{sec:vect-line-setting} in
the case of standard LODMs as defined in \autoref{def:standard-lodms}.
The moment assumption~\ref{item:vlodm-ident-cond-log-moment} can be readily used
for  standard LODMs with known or unknown observation kernel, with
$|\cdot|$ in~(\ref{eq:identif:abitofmoment}) denoting the usual
absolute value.
The other assumptions of the general VLODM listed in
\autoref{sec:vect-line-setting} can be simplified as follows.

For a standard LODM,
Assumption~\ref{item:vlodm-ident-cond-cond-dens-def} becomes
\begin{hyp}{SL}
\item\label{item:lodm-lipschitz-standard} For all
  $\theta=(\omega,\chunk{a}{1}{p},\chunk{b}{1}{q})$, we have
  $1-\sum_{k=1}^pa_kz^k\neq0$ for all $z\in\cset$ such that $|z|\leq1$.
\end{hyp}
In the case of  a standard LODM with unknown observation kernel it
becomes
\begin{addonehypprim}{SL}
\item\label{item:lodm-lipschitz-standard-unknownG} For all
  $\theta=(\vartheta,\varphi)\in\Theta$ with
  $\vartheta=(\omega,\chunk{a}{1}{p},\chunk{b}{1}{q})$, we have
  $z-\sum_{k=1}^pa_kz^k\neq0$ for all $z\in\cset$ such that
  $|z|\leq1$.
\end{addonehypprim}
As
for~\ref{item:vlodm--non-degenerateident-G}, it becomes
\begin{hyp}{SL}
\item\label{item:lodm-degenerate-standard} For all $x\in\Xset$,
  $\Upsilon(Y)$ does not degenerate to a single point for
  $Y\sim G(x,\cdot)$, that is, for all
  $u\in\rset$, we have $G(x,\{\Upsilon(\cdot)=u\})<1$;
\end{hyp}
and, in case of an unknown observation kernel,
\begin{addonehypprim}{SL}
\item \label{item:lodm-degenerate-standard-unknown-case} For all  $\varphi\in\Phi$,
  $x\in\Xset$ and
  $u\in\rset$, we have $G^{\varphi}(x,\{\Upsilon(\cdot)=u\})<1$.
\end{addonehypprim}
Finally \ref{item:ident:G:gen:od:eq} becomes
\begin{hyp}{SL}
\item \label{item:ident:standard:G}   For all $x,x'\in\Xset$,
 $G(x;\cdot) = G(x';\cdot)$ if and only if $x=x'$;
\end{hyp}
and, in the case of an unknown observation kernel, it
reads as
\begin{addonehypprim}{SL}
\item \label{item:ident:standard:G:unknown}   For all
  $\theta=(\vartheta,\varphi)$ and
  $\thv=(\vartheta_{\star},\varphi_{\star})$ in $\Theta$, for all
  $x,x'\in\Xset$, we have
 \begin{equation*}
 G^\varphi(x;\cdot) = G^{\varphi_\star}(x';\cdot)\quad\text{if and only if}\quad \varphi=\varphi_\star \quad\text{and}\quad x=x'\;.
 \end{equation*}
\end{addonehypprim}
This says that the class $[\thv]_G$
in~\ref{item:ident:G:gen:od:eq} is given by
$[\thv]_G=\set{\theta=(\vartheta,\varphi)\in\Theta}{\varphi=\varphi_{\star}}$.

Remarkably, all LODMs share the same necessary and
sufficient condition for identifiability, which can be expressed as follows, using
$\chunk{a^\star}1p$ and $\chunk{b^\star}1q$ to denote the true linear
coefficients of the linear link function.
\begin{hyp}{SL}
\item\label{item:lodm-ident-poly} The polynomials $z^p-\sum_{k=1}^pa^{\star}_{k}z^{p-k}$ and
    $\sum_{k=0}^{q-1} b^{\star}_{k+1}\ z^{q-1-k}$
have no common complex roots.
\end{hyp}
We can now state the following result, which says that the true
parameter $\thv$ in the interior of $\Theta$ is identifiable if and only
if~\ref{item:lodm-ident-poly} holds.
\begin{theorem}
  \label{thm:gener-ident-cond}
  Consider a standard LODM$(p,q)$
  satisfying~\ref{assum:gen:identif:unique:pi:gen}
  and~\ref{item:vlodm-ident-cond-log-moment} for some
  $p,q\in\zsetpnz$, and suppose that $0\in\Uset$. In the case of a
  known observation kernel, suppose that
  \ref{item:lodm-lipschitz-standard}--\ref{item:ident:standard:G}
  hold.   In the case of an
  unknown observation kernel, suppose that
  \ref{item:lodm-lipschitz-standard-unknownG}--\ref{item:ident:standard:G:unknown} hold.
  Then the inclusion in~(\ref{eq:ident_fundamental_E}) is an
  equality for all $\thv\in\Theta$. In the case of a
  known observation kernel,
  Assertions~\ref{item:thm:gener-ident-cond:cs}
  and~\ref{item:thm:gener-ident-cond:cn} below hold for any
  $\thv=(\omega^\star,\chunk{a^\star}1p,\chunk{b^\star}1q)\in\Theta$.
In the case of a
  known observation kernel,
  Assertions~\ref{item:thm:gener-ident-cond:cs}
  and~\ref{item:thm:gener-ident-cond:cn:unkown} below hold for any
  $\thv=(\omega^\star,\chunk{a^\star}1p,\chunk{b^\star}1q,\varphi_\star)\in\Theta$.
  \begin{enumerate}[label=(\roman*)]
  \item\label{item:thm:gener-ident-cond:cs} Condition
    \ref{item:lodm-ident-poly} implies that 
    $[\thv]$ reduces to the singleton $\{\thv\}$.
  \item\label{item:thm:gener-ident-cond:cn} If Condition~\ref{item:lodm-ident-poly} does
    not hold, then there exists an open segment $I^\star\subset\rset^{1+p+q}$ of positive
    length and containing $\thv$ such that
    $I^\star\cap\Theta\subset[\thv]$.
  \item\label{item:thm:gener-ident-cond:cn:unkown} If~\ref{item:lodm-ident-poly} does
    not hold, then there exists an open segment $I^\star\subset\rset^{1+p+q}$ of positive
    length and containing $(\omega^\star,\chunk{a^\star}1p,\chunk{b^\star}2q)$ such that
    $\set{(\vartheta,\varphi_\star)\in\Theta}{\vartheta\in I^\star}\subset[\thv]$.
  \end{enumerate}
\end{theorem}
\begin{remark}\label{rem:standard-lodms} Let us briefly comment this result.
  \begin{enumerate}[label=(\arabic*)]
  \item The ergodicity of all the examples of
    \autoref{sec:standard-lodm} have been studied in the provided
    references and the parameter set $\Theta$ is always chosen to satisfy the
    assumptions of \autoref{thm:gener-ident-cond} in these references.
  \item\label{item:rem:standard-lodms1}
  If $p=q=1$, condition~\ref{item:lodm-ident-poly} is reduced to
  $b_1^\star\neq0$. Let us see what $b_1^\star=0$ would imply about the
  identifiability of the model in this simple case.  Taking $\tilde\psi^\theta$ as
  in~(\ref{eq:tilde-psi:affine:vector:case}) with $p=q=p'=q'=1$, if $\boldsymbol{\omega}(\theta)=\omega$,
  $A_1(\theta)=a^\star_1$ and $B_1(\theta)=b_1^\star=0$, then
  $\sequence{X}[k][\zsetp]$ is a deterministic sequence which, under the
  stationary distribution, has to be constantly equal to
  $x^\star=\frac{\omega^\star}{1-a_1^\star}$.
  But since the distribution of $\sequence{Y}$ is then uniquely defined by this
  constant, if one can find a parameter $\theta$ with corresponding
  coefficients $\omega,a_1,b_1$ such that $b_1=0$,
  $(\omega,a_1)\neq(\omega^\star,a_1^\star)$ yielding the same constant
  $\omega/(1-a_1)=\omega^\star/(1-a_1^\star)$, we see that the model is not identifiable.
\item Condition~\ref{item:lodm-ident-poly}
  holds for ``many'' parameters $\chunk{a^\star}{1}{p},\chunk{b^\star}{1}{q}$,
  e.g. for Lebesgue almost all ones in $\rset^{p+q}$.
\item The identifiability condition~\ref{item:lodm-ident-poly} is a
  well known sufficient condition in the standard GARCH$(p,q)$ models,
  see \cite[(A4)]{francq2004maximum} or
  \cite[Condition~(2.27)]{berkes03}. Assertion~\ref{item:thm:gener-ident-cond:cn}
  in \autoref{thm:gener-ident-cond} shows that it is also
  necessary at least for all parameters in the interior set of
  $\Theta$.
\item\label{item:exple-strict-inclusion} Suppose that $\Uset$ and
  $\Xset$ both contain at least two different points and take
  $\chunk {a^\star}1p$ and $\chunk {b^\star}1q$ to be all
  non-zero. Then it is easy to show for a standard LODM with known
  observation kernel that
  $\tilde{\Psi}^\theta_u(z)=\tilde{\Psi}^\thv_u(z)$ for all
  $u\in\Uset$ and $z\in\Zset$ implies $\theta=\thv$ and thus, we get
  that the left-hand side of~(\ref{eq:ident_fundamental}) reduces to
  the singleton $\{\thv\}$.  Since, as explained previously,
  \ref{item:lodm-ident-poly} is necessary to have $[\thv]=\{\thv\}$
  for all $\thv$ in the interior set of $\Theta$, we easily get
  examples for which the inclusion in~(\ref{eq:ident_fundamental}) is
  strict.
\item\label{item:identif:lodm:mpingarch} \autoref{thm:gener-ident-cond} can be applied to
  all the models mentioned in \autoref{sec:standard-lodm}. Let us
  examine the case of the MPINGARCH($p,q$) model of \cite{silva19},
  which constitutes a rich class of integer valued models. An
  MPINGARCH($p,q$) model is an LODM($p,q$) model with unknown
  observation kernel $G^{\varphi}(x,\cdot)$ defined as a mixed Poisson
  distribution with mean $x$ and variance proportional to
  $\varphi^{-1}$, and $\Upsilon(y)=y$. In \cite[Theorem~1]{silva19},
  the sufficient conditions for
  having~\ref{assum:gen:identif:unique:pi:gen}
  imply~\ref{item:vlodm-ident-cond-log-moment} (since
  $\mathbb{E}^\theta[|U_0]<\infty$)
  and~\ref{item:lodm-lipschitz-standard} (since they imply
  $\sum_ia_i<1$, with
  $a_i\geq0$). Conditions~\ref{item:lodm-lipschitz-standard-unknownG}
  and~\ref{item:lodm-degenerate-standard-unknown-case} also hold by
  definition of $G^{\varphi}(x,\cdot)$. Hence
  \autoref{thm:gener-ident-cond} applies and we get that
  \ref{item:lodm-ident-poly} is a sufficient condition for
  identifiablity. It is also necessary by
  Assertion~\ref{item:thm:gener-ident-cond:cn:unkown} of the theorem,
  at least for parameters in the interior of $\Theta$. This condition
  seems to be missing in \cite[Theorem~2]{silva19}.
\end{enumerate}
\end{remark}
\begin{proof}[Proof of \autoref{thm:gener-ident-cond}]
We only consider the case with unknown observation kernel (the case
with known observation kernel is obtained by removing the additional parameter $\varphi$).

Let $\thv=(\vartheta_\star,\varphi_\star)\in\Theta$ with
$\vartheta_\star=(\omega^\star,\chunk{a^\star}1p,\chunk{b^\star}1q)$.
As explained previously, the assumptions of
\autoref{thm:gener-ident-cond} are adapted from those derived in
\autoref{sec:vect-line-setting}. In particular, we have that
\ref{item:vlodm-ident-cond-cond-dens-def}--\ref{item:vlodm-ident-cond-log-moment}
hold. Hence \autoref{lem:ergo:ass:1} implies that
\ref{item:CCMShyp}--\ref{item:ident-nu-nondegenerate} hold in the
general setting with $\Uset=\rset$. Applying \autoref{thm:general-setting-main-result},
we get that \ref{item:X:from:Y:determ} holds and that
$$
\langle\thv\rangle=\set{\theta\in\Theta}{\tilde\Psi^\theta_u(z)=\tilde\Psi^{\thv}_u(z)\text{
    for all }(z,u)\in \mathrm{E}^\thv\times\Uset}\;,
$$
where $\langle\thv\rangle$ and $\mathrm{E}^\thv$ are as in
\autoref{def:ident:theta:gen:od:link} and \autoref{def:E-sets}.
Remember that \ref{item:ident:standard:G:unknown} says that
\ref{item:ident:G:gen:od:eq} holds with
$$
[\thv]_G=\set{\theta=(\vartheta,\varphi)\in\Theta}{\varphi=\varphi_{\star}}\;.
$$
By
\autoref{thm:the-main-result-general-setting}, we get that
the inclusion in~(\ref{eq:ident_fundamental_E}) is an equality and
$$
[\thv]=\set{\theta=(\vartheta,\varphi_\star)\in\Theta}{\tilde\Psi^\theta_u(z)=\tilde\Psi^{\thv}_u(z)\text{
    for all }(z,u)\in \mathrm{E}^\thv\times\Uset}\;.
$$
Note that we assumed that $0\in\Uset$ and
that~\ref{item:lodm-degenerate-standard-unknown-case} implies
$\Uset\neq\{0\}$, hence we can apply \autoref{lem:vlodm:angle-set}
which gives that $[\thv]$ is the set of all
$\theta=(\omega,\chunk{a}1p,\chunk{b}1q,\varphi_\star)\in\Theta$ such
that
  \begin{align*}
    \left(1-\sum_{k=1}^pa_k\right)\omega^\star
  &=\left(1-\sum_{k=1}^pa_k^\star\right)\omega   \;,  \\
    \text{and}\quad
    \frac{\sum_{k=0}^{q-1} b_{k+1}^\star\ z^{q-1-k}}
    {z^p  -\sum_{k=1}^pa^\star_{k}z^{p-k}}
 &=\frac{\sum_{k=0}^{q-1} b_{k+1}\ z^{q-1-k}}{z^p
  -\sum_{k=1}^pa_{k}z^{p-k}}
 \;.
  \end{align*}
Applying \autoref{lem:canonical-vlodm}, we easily get
Assertions~\ref{item:thm:gener-ident-cond:cs} and
\ref{item:thm:gener-ident-cond:cn:unkown}.
\end{proof}
\subsection{The bivariate example of \autoref{sec:bivariate-example}}
\autoref{thm:gener-ident-cond} can be extended to the standard VLODM
case of \autoref{def:standard-vlodms}. Here, for brevity, we do not
re-express the general VLODM assumptions
\ref{item:vlodm-ident-cond-cond-dens-def}-\ref{item:vlodm-ident-cond-log-moment}
in the \emph{standard} setting as we did previously for standard
LODMs. We only need to introduce the condition
\begin{addonehypprim}{SL}
\item\label{item:lodm-ident-poly-left-coprim} The polynomials $z^p-\sum_{k=1}^pA^{\star}_{k}z^{p-k}$ and
    $\sum_{k=0}^{q-1} B^{\star}_{k+1}\ z^{q-1-k}$ are left-coprime,
  \end{addonehypprim}
  which extends \ref{item:lodm-ident-poly} to the case $p',q'\geq1$.
The proof of the following result mimics the one of
\autoref{thm:gener-ident-cond} and is thus omitted.
\begin{theorem}
  \label{thm:gener-ident-cond-vlodm}
  Consider a \emph{standard} VLODM$(p,q,p',q')$
  satisfying~\ref{assum:gen:identif:unique:pi:gen} for some
  $p,q,p',q'\in\zsetpnz$. Suppose that $0\in\Uset$ and that
  \ref{item:vlodm-ident-cond-cond-dens-def}-\ref{item:vlodm-ident-cond-log-moment}
  and~\ref{item:ident:standard:G:unknown} hold.  Then, for all
  $\thv=(\boldsymbol{\omega}^\star,\chunk{A^\star}1p,\chunk{B^\star}1q,\varphi_\star)\in\Theta$
  , the inclusion in~(\ref{eq:ident_fundamental_E}) is an equality and the
  two following assertions hold.
  \begin{enumerate}[label=(\roman*)]
  \item\label{item:thm:gener-ident-cond-vlodm-standard:cs}
    Condition~\ref{item:lodm-ident-poly-left-coprim} implies that $[\thv]$ reduces to the singleton $\{\thv\}$.
  \item\label{item:thm:gener-ident-cond-vlodm-standard:cn:unkown} If~\ref{item:lodm-ident-poly-left-coprim} does
    not hold, then there exists an open segment $I^\star\subset\rset^{1+p+q}$ of positive
    length and containing $(\omega^\star,\chunk{a^\star}1p,\chunk{b^\star}2q)$ such that
    $\set{(\vartheta,\varphi_\star)\in\Theta}{\vartheta\in I^\star}\subset[\thv]$.
  \end{enumerate}
\end{theorem}
\begin{remark}\label{rem:standard-vlodms-known}
  In \autoref{thm:gener-ident-cond-vlodm}, for brevity, we only
  stated the case with unknown observation kernel. The case with
  known observation kernel follows by removing the
  parameters $\varphi$ and  $\varphi_\star$ in the
  statement and by replacing \ref{item:ident:standard:G:unknown} by \ref{item:ident:standard:G}.
\end{remark}
Since the bivariate integer valued GARCH model of
\autoref{sec:bivariate-example} is a \emph{standard} VLODM($1,1,2,2$)
with unknown observation kernel, we just need to check the assumptions
of \autoref{thm:gener-ident-cond-vlodm}. Ergodicity (hence our
Assumption~\ref{assum:gen:identif:unique:pi:gen}) is stated in
\cite[Theorem~1]{cui-zhu-2018} under their set of condition (a) on the
parameter $\theta=(\boldsymbol{\omega},A,B,\varphi)$. Their conditions
for ergodicity implies some operator norm of $A$ to be strictly less
than 1, which implies $\mathrm{I}_{2}-A\;z$ to be invertible for
$|z|\leq1$ and thus~\ref{item:vlodm-ident-cond-cond-dens-def}
holds. Since for all $x_1,x_2>0$, the bivariate distribution defined
by~(\ref{eq:bivariate-poisson}) has positive probability on all points
$(y_1,y_2)\in\zsetp^2$, it cannot have probability one on a line of
$\rset^2$, hence~\ref{item:vlodm--non-degenerateident-G}. Also it is
claimed following \cite[Theorem~1]{cui-zhu-2018} that
$\mathbb{E}^\theta[U_0]$ is well defined and thus
\ref{item:vlodm-ident-cond-log-moment} holds. Applying
\autoref{thm:gener-ident-cond-vlodm}, we get that, for any interior
point $\thv$ of the parameter space,
Condition~\ref{item:lodm-ident-poly-left-coprim} is a necessary and
sufficient condition to have identifiability of $\thv$. In the
bivariate integer valued GARCH model (for which $p=q=1$), this
condition reads for
$\thv=(\boldsymbol{\omega}^\star,A^\star,B^\star,\varphi^\star)$ as
$\mathrm{I}_{2}\,z-A^\star$ and $B^\star$ to be left coprime. If $\thv$
does not satisfy this condition, consistent estimation of $\thv$ is
not possible. Hence we believe that this assumption is missing in
\cite[Theorem~2]{cui-zhu-2018}. A precise counter-example is for
instance obtained by setting
$$
A^\star=\alpha
\begin{bmatrix}
  1&1\\
  1&1
\end{bmatrix}
\quad\text{and}\quad
B^\star=\beta
\begin{bmatrix}
  1&1\\
  1&1
\end{bmatrix}\;,
$$
where $\alpha,\beta>0$ are arbitrary (and chosen in order to make
$\thv=(\boldsymbol{\omega}^\star,A^\star,B^\star,\varphi^\star)$ in
the interior of $\Theta$). One can show that
$\mathrm{I}_{2}\,z-A^\star$ and $B^\star$ are not left coprime since they
both admit the same non-unimodular left divisor
$L=
\begin{bmatrix}
  (z-\alpha)& 1\\
  (-\alpha)& 1
\end{bmatrix}$
as shown by the following identities:
$$
\mathrm{I}_{2}\,z-A^\star=L\;
\begin{bmatrix}
  1&-1\\
  0&(z-2\alpha)
\end{bmatrix}
\quad\text{and}\quad
B^\star=L\;
\begin{bmatrix}
  0&0\\\beta&\beta
\end{bmatrix}
\;.
$$

\subsection{The non-linear GARCH of \autoref{sec:non-linear-garch}}
Consider \ref{item:non-linear-case1} of
\autoref{sec:non-linear-garch}, for which $\delta$ is not
included in the set of parameters. Then the
non-linear GARCH model is a standard VLODM($p,q,1,2$) model and
identifiability can be treated using \autoref{thm:gener-ident-cond-vlodm}. Using that
$\Upsilon(y)=((y^+)^\delta,(y^-)^\delta)$, for all $x>0$, $\tilde
G(x,\cdot)$ in~(\ref{eq:def:tildeG}) (we omit $\theta$ as $G$ does not
depend on $\theta$ here) has support included in
$\rsetp\times\{0\}\cup\{0\}\times\rsetp$, where it is defined, for all
Borel set $A\subset\rsetp$ by
$$
\tilde{G}(x;\{0\}\times A)=\PP(x\,(\eta^+)^\delta\in A)
\quad\text{and}\quad\tilde{G}(x; A\times\{0\})=\PP(x\,(\eta^-)^\delta\in A)
$$
It follows that our Assumption~\ref{item:vlodm--non-degenerateident-G}
is equivalent to having that $0<\PP(\eta>0)<1$ and that there is no
pair $\{u,v\} $, $u\neq v\in\rset$, such that $\PP(\eta\in\{u,v\})=1$,
which is exactly the condition appearing in the second part of
\cite[A3]{hamadeh-zakoian:jspi2011}. Our
conditions~\ref{item:vlodm-ident-cond-cond-dens-def} (which here,
since $A_k=a_k\geq0$ for all $k=1,\dots,p$
simply reads $\sum_ka_k<1$) and \ref{item:vlodm-ident-cond-log-moment}
are usual byproducts of showing the ergodicity
condition~\ref{assum:gen:identif:unique:pi:gen}, see
\cite[Appendix~A]{hamadeh-zakoian:jspi2011}.
One can thus apply our \autoref{thm:gener-ident-cond-vlodm} (in its know
observation kernel version, see
\autoref{rem:standard-vlodms-known}) and
obtain the necessary and sufficient condition
\ref{item:lodm-ident-poly-left-coprim} which in the case where the
parameters $A_k$ are denoted by $a_k\in\rset$ for $k=1,\dots,p$ and the parameters
$B_k$ denoted by
$\mathbf{b}_k\in\rset^2$ for $k=1,\dots,q$, becomes, for any
$\thv=(\omega^\star,\chunk{a^\star}1p,\chunk{\mathbf{b}^\star}1q)$
with
$\mathbf{b}^\star_k=\begin{bmatrix}b^\star_k(1)&b^\star_k(2)\end{bmatrix}$
for $k=1,\dots,q$,
\begin{hyp}{NLG}
\item The polynomial
  $z^p-\sum_{k=1}^pa^{\star}_{k}z^{p-k}$     have no common complex
  roots neither with the polynomial
    $\sum_{k=0}^{q-1} b^{\star}_{k+1}(1)\ z^{q-1-k}$
nor with the polynomial     $\sum_{k=0}^{q-1} b^{\star}_{k+1}(2)\ z^{q-1-k}$.
\end{hyp}
This condition is similar to that appearing in the identifiability
condition \cite[A4]{hamadeh-zakoian:jspi2011} used in a mis-specified
context. Our result shows that this condition is necessary in the
interior of the parameter set in the well-specified case, and remains
valid for much larger choices of observation kernels.

\subsection{The SETPAR model of \autoref{sec:other-non-linear}}\label{sec:setpar-appli}
We have the following result for the self-excited threshold Poisson autoregression
model.
\begin{theorem}\label{thm:setpar}
Consider the SETPAR
model introduced in \autoref{sec:other-non-linear}. Let
$$
\Theta\subset\set{(\omega_1,\omega_2,a_1,a_2,b_1,b_2,r)\in\rsetpnz^2\times[0,1)^2\times\rsetp\times[0,1)\times\zsetp}{a_2+b_2<1}\;,
$$
Then ~\ref{assum:gen:identif:unique:pi:gen} holds. Let
$\thv=(\omega_1^\star,\omega_2^\star,a_1^\star,a_2^\star,b_1^\star,b_2^\star,r^\star)\in\Theta$
satisfy at least one of the two following conditions.
\begin{enumerate}[label=(\roman*)]
\item\label{item:setpar-cond1} $b_1^\star>0$ and $r^\star\geq1$;
\item\label{item:setpar-cond2} $b_2^\star>0$.
\end{enumerate}
Then we have
\begin{equation}
  \label{eq:ident_fundamental:tpa}
[\thv]=\set{\theta\in\Theta}{\psi^\theta_{y}(x)=\psi^\thv_{y}(x)\quad\text{for
  all}\quad x\in\rset\text{ and }y\in\zsetp}\;,
\end{equation}
where $\psi^\theta_{y}(x)$ is defined by~(\ref{eq:tpar-link}).
\end{theorem}
\begin{remark}
  Let us briefly comment this result.
  \begin{enumerate}[label=(\arabic*)]
  \item The case where neither~\ref{item:setpar-cond1}
    nor~\ref{item:setpar-cond2} hold ($b_1^\star=b_2^\star=0$ or $r^\star=b_2^\star=0$) is somehow degenerate,
    similarly to the non-threshold case mentioned
    in \autoref{rem:standard-lodms}\ref{item:rem:standard-lodms1}. We
    think it should be treated separately but we omit this very
    special case here for brevity.
  \item As explained after~(\ref{eq:ident_fundamental}), the
    identity~(\ref{eq:ident_fundamental:tpa}) is the best we could hope
    for this model since the distribution $\tilde\PP^\theta$ of the observations is entirely
    determined by the mapping $(u,x)\mapsto\psi^\theta_u(x)$ on
    $\zsetp\times\rsetp$.
  \item The identity~(\ref{eq:ident_fundamental:tpa}) shows in
    particular that $\thv$ is not identifiable if $r^\star=0$ (since
    changing $b^\star_1$ will have no effect on the mapping  $(u,x)\mapsto\psi^\theta_u(x)$ on
    $\zsetp\times\rsetp$). Another case of non-identifiability is when $a_1^\star=a_2^\star$ and $\omega_1^\star + b_1^\star (r+1)=\omega_2^\star + b_2^\star (r+1)$. In such a case, we have for all $x\in \rset$,
    $$
  \omega_1^\star +  a_1^\star x+ b_1^\star y
=  \omega_2^\star +  a_2^\star x+ b_2^\star y\quad\text{at $y=r+1$.}
$$
Then, setting
$\theta=(\omega_1^\star,\omega_2^\star,a_1^\star,a_2^\star,b_1^\star,b_2^\star,r^\star+1)$,
we immediately have that $\psi^\theta_{y}(x)=\psi^\thv_{y}(x)$ for all
$x\in\rset$ and $y\in\zsetp$. In particular consistent estimation of
$\thv$ as claimed in \cite[Theorem~2]{wang2014self} is not
possible for such a parameter $\thv$.
  \end{enumerate}
\end{remark}
\begin{proof}[Proof of \autoref{thm:setpar}]
  A natural choice for $\Xset$ is $\rsetpnz$ but in order to meet
  Assumption \ref{item:CCMShyp} with $\Xmet(x,x')=|x-x'|$ we take
  $\Xset=\rsetp$ with $G^\theta(0,\cdot)$ arbitrarily set to be
  Bernoulli with mean $1/2$ for convenience (it actually has no
  influence on $\PP^\theta$ since $\omega_0,\omega_1>0$ in the
  condition on $\Theta$). We set $\Upsilon(y)=y$ so that $U_k=Y_k$ for
  all $k$ and the reduced link function $\tilde\psi$ is the same as
  the non-reduced one. Moreover since $p=q=1$, we are in the case
  where $Z_k=X_k$ for all $k$, $\zset=\Xset=\rsetp$ and
  $\tilde{\Psi}_u^\theta=\tilde\psi_u^\theta$ for all $u$. By
  \cite[Theorem~1]{wang2014self}, with $\Theta$ satisfying the given
  condition, Assumption~\ref{assum:gen:identif:unique:pi:gen} holds,
  and, moreover, for any $\ell>0$ and $\theta\in\Theta$,
  $\mathbb{E}^\theta[U_0^\ell]<\infty$ (in fact, on can prove that,
  for any $\theta\in\Theta$, there exists $\ell>0$ such that
  $\mathbb{E}^\theta[\exp(\ell U_0)]<\infty$). This moment condition
  implies that the log moment condition
  \ref{item:ident-cond-log-moment} holds for any
  $\initmlex_1\in\Xset$.  Clearly, we have
  $\mathrm{Lip}^\theta_1=a_1\vee a_2$ and, since $p=q=1$,
  $\mathrm{Lip}^\theta_n\leq(\mathrm{Lip}^\theta_1)^n$ for all
  $n\geq1$. Thus the above condition on $\Theta$ also
  implies~\ref{assum:bound:rho:gen}. As for
  \ref{item:continuity-in-x-for-u-given}, it trivially holds (since
  $y=u$ is fixed in this condition). Hence with
  \autoref{lem:Lip:XY-cond}, we get that
  \ref{assum:gen:identif:unique:pi:gen}--\ref{item:continuity-in-x-for-u-given}
  holds, with
  definitions~(\ref{eq:definition:fF:asalimit:with:domain}) for
  checking~\ref{item:X:from:Y:determ}.
  Assumption~\ref{item:ident:G:gen:od:eq} is also immediate with
  $[\theta]_G=\Theta$ and \autoref{thm:strong:identifiability:gen:od}
  gives that, for any $\thv\in\Theta$,
$$
[\thv]=\langle\thv\rangle\;,
$$
where $\langle\thv\rangle$ can be defined
by~(\ref{eq:to-prove-deidentif:def-angle-set-limit-case}).  Assumption
\ref{item:ident-nu-nondegenerate} holds by
\autoref{rem:ident-non-degenerate}\ref{item:rem:ident-non-degenerate1}. Hence
all the assumptions of \autoref{thm:general-setting-main-result} hold.
Take now
$\thv=(\omega_1^\star,\omega_2^\star,a_1^\star,a_2^\star,b_1^\star,b_2^\star,r^\star)\in\Theta$
satisfying~\ref{item:setpar-cond1} or~\ref{item:setpar-cond2}. Let
$\theta\in\Theta$, $u\in\Uset$ and $z\in\Zset$.  To conclude the
proof, it is now sufficient to check that if $\tilde \Psi^\theta_u$
and $\tilde \Psi^\thv_u$ coincide on
$\set{\tF[\thv]{v}(z)}{n\in\zsetp,\,v\in\Uset^n}$, then they must
coincide on $\Zset$, so that we can apply
\autoref{thm:the-main-result-general-setting}. Observe that by
definition of the link function in~(\ref{eq:tpar-link}),
if~\ref{item:setpar-cond1} or~\ref{item:setpar-cond2} holds, then
$v\mapsto\psi_v^\thv(z)$ takes at least two different
values on $\Uset=\zsetp$. Since $\tF[\thv]{v}(z)=\psi_v^\thv(z)$ for
all $v\in\Uset$, these two different values belong to
$\set{\tF[\thv]{v}(z)}{n\in\zsetp,\,v\in\Uset^n}$. Now, since
$\tilde \Psi^\thv_u=\psi_u^\thv$ and $\tilde
\Psi^\theta_u=\psi_u^\theta$ are affine functions, if they coincide in
two points they must coincide everywhere, and the proof is concluded.
\end{proof}
\subsection{The PARX model of \autoref{sec:parx-model}}\label{sec:parx-model-appli}
We have the following result for the Poisson autoregression model with
exogenous covariates.
\begin{theorem}\label{thm:parx}
Consider  the PARX model defined in \autoref{sec:parx-model}, which is a
VLODMX($p,q,1,1+d$). Suppose
that~\ref{assum:gen:identif:unique:pi:gen:X},
\ref{item:vlodm-ident-cond-cond-dens-def} and
\ref{item:vlodm-ident-cond-log-moment} hold, and that the exogenous
kernel $H$ satisfies the following.
\begin{hyp}{P}
  \item\label{item:PARX--non-degenerateident-G-X} We have
    $H(v;\{\chunk f1d(\cdot)\in A\})<1$ for all  $v\in\Vset$ and affine hyperplanes
  $A\subset\rset^{d}$.
\end{hyp}
Then, for all $\thv\in\Theta$, the equivalent class $[\thv]$ reduces to the
singleton $\{\thv\}$.
\end{theorem}
\begin{remark}
  Let us briefly comment this result.
  \begin{enumerate}[label=(\arabic*)]
  \item \ref{item:PARX--non-degenerateident-G-X} is a
    natural assumption as it basically says that the covariates
    $f_1(V_{k}),\dots,f_d(V_{k})$ are not linearly related
    conditionally to $V_{k-1 }$. If they were, it would suggest using
    a smaller set of covariates.
  \item The ergodicity of PARX models (our
    assumption~\ref{assum:gen:identif:unique:pi:gen:X} is treated in
    \cite[Theorem~1]{AGOSTO2016640} under some assumption on the
    covariate kernel $H$ (in their assumption~2). Their assumption~3
    used for proving \ref{assum:gen:identif:unique:pi:gen:X} implies
    $\sum_k a_k<1$ which implies our
    assumption~\ref{item:vlodm-ident-cond-cond-dens-def} (since
    $a_k\geq0$ for all $k=1,\dots,p$). Note also that
    \cite[Theorem~1]{AGOSTO2016640} implies that
    $\mathbb{E}^\theta[|U_0|]<\infty$ and thus our
    assumption~\ref{item:vlodm-ident-cond-log-moment}. On the other
    hand, their identifiability condition
    \cite[Assumption~5]{AGOSTO2016640} include a condition on
    parameters $\chunk{a^\star}1p,\chunk{b^\star}1q$ similar to our
    condition ~\ref{item:lodm-ident-poly} used for standard LODM's in
    \autoref{thm:gener-ident-cond} above. \autoref{thm:parx} shows
    that such a condition can in fact be dropped in case of exogenous
    covariates provided that the mild condition
    \ref{item:PARX--non-degenerateident-G-X} holds. 
  \item This result is similar to  \autoref{thm:gener-ident-cond} for
    the standard LODM. It is of interest to note that
    there is no additional condition on $\chunk{\gamma^\star}1d$ for
    identifiability.
  \item \autoref{thm:parx} easily extends to more general observation
    kernels (known or unknown) $G^\theta$, provided that, as in
    \autoref{thm:gener-ident-cond},
    assumptions~\ref{item:lodm-degenerate-standard}-\ref{item:ident:standard:G} hold
    if the observation kernel is known, or
    \ref{item:lodm-degenerate-standard-unknown-case}-\ref{item:ident:standard:G:unknown}
    if it is unknown. However, the ergodicity would require a
    specific treatment in these cases, as only the Poisson case has
    been considered up to our knowledge.
  \end{enumerate}
\end{remark}
\begin{proof}[Proof of \autoref{thm:parx}]
  For the PARX model we set
  $\Upsilon(y,v)=(y,f_1(v),\dots,f_d(v))\in\Uset=\rset^{1+d}$ and
  $\Xset=\rsetp$ (with $G(0,\cdot)$ arbitrarily set, say, to be
  Bernoulli with mean $1/2$, as in the proof of \autoref{thm:setpar}) and
  \ref{item:CCMShyp} holds with $\Xmet(x,x')=|x-x'|$ and
  $\Umet(u,u')=|u-u'|$ where $|\cdot|$ here is an arbitrary norm on
  $\rset^{1+d}$. In this case, we have that, for all
  $(x,v)\in\Xset\times\Vset$, if $(Y,W)\sim\tilde{G}((x,v),\cdot)$
  with $Y$ valued in $\rset$, $W$ valued in $\rset^d$ and $\tilde{G}$
  defined by~(\ref{eq:def:tildeG:X}), we have that $Y$ and $W$ are
  independent and $Y$ follows a Poisson distribution. Thus
  \ref{item:vlodm--non-degenerateident-G-X} is equivalent to
  \ref{item:PARX--non-degenerateident-G-X}. We can thus apply
  \autoref{lem:ergo:ass:1} and get that
  Assumptions~\ref{item:CCMShyp}, \ref{assum:bound:rho:gen},
  \ref{item:ident-cond-log-moment},
  \ref{item:continuity-in-x-for-u-given} and
  \ref{item:ident-nu-nondegenerate-X} hold with
  $\Uset=\rset^{1+d}$. Then, having assumed
  \ref{assum:gen:identif:unique:pi:gen:X}, we can apply
  \autoref{thm:general-setting-main-result} and get that
  \ref{item:X:from:Y:determ} holds as well as the
  identity~(\ref{eq:thm-ident-general-odm}).
  Assumption é\ref{item:ident:G:gen:od:eq} is immediate with
$[\thv]_G=\Theta$ and,
Applying \autoref{thm:strong:identifiability:gen:od} and the previous display
we get that
$$
[\thv]=\set{\theta\in\Theta}{\tilde\Psi^\theta_u(z)=\tilde\Psi^{\thv}_u(z)\text{
    for all }(z,u)\in \mathrm{E}^\thv\times\Uset}\;.
$$
where $\langle\thv\rangle$ and $\mathrm{E}^\thv$ are as in
\autoref{def:ident:theta:gen:od:link} and \autoref{def:E-sets}.
\autoref{lem:vlodm:angle-set} and the definitions of $\chunk A1p$ and
$\chunk B1q$ in \autoref{sec:parx-model} now give that $[\thv]$ is the
set of all $\theta=(\omega,\chunk{a}1p,\chunk{b}1q,\chunk \gamma1d)\in\Theta$ such that
  \begin{align*}
    \left(1-\sum_{k=1}^pa_k\right)\omega^\star
  &=\left(1-\sum_{k=1}^pa_k^\star\right)\omega   \;,  \\
    \frac{\sum_{k=0}^{q-1} b_{k+1}^\star\ z^{q-1-k}}
    {z^p  -\sum_{k=1}^pa^\star_{k}z^{p-k}}
 &=\frac{\sum_{k=0}^{q-1} b_{k+1}\ z^{q-1-k}}{z^p
   -\sum_{k=1}^pa_{k}z^{p-k}}   \;,  \\
    \left(z^p  -\sum_{k=1}^pa^\star_{k}z^{p-k}\right)^{-1}\;d_i^\star\, z^{q-1}
    &=
          \left(z^p -\sum_{k=1}^pa_{k}z^{p-k}\right)^{-1} \;d_i\,
      z^{q-1}\quad\text{for }1\leq i\leq d
 \;.
  \end{align*}
  Note that the last line in this set of equations is equivalent to
    \begin{align*}
 a^\star_{k}=a_k\quad\text{and}\quad     d_i=d^\star_i\quad\text{for
      all }1\leq i\leq d \quad\text{and}\quad1\leq k\leq p\;.
    \end{align*}
    But the the two first lines of the previous display give that
    $\omega=\omega^\star$ and $\chunk{b^\star}1q=\chunk b1q$, thus
    $\theta=\thv$ and the
    proof is concluded.
\end{proof}

\section{Postponed Proofs}\label{sec:append:proofs:gen:od}
\subsection{Proof of  \autoref{lem:Lip:XY-cond}}
  \label{sec:proof-XY-cond}
We first derive the following result.
\begin{lemma}\label{lem:Lip:XY-cond:geom}
  \ref{assum:bound:rho:gen} implies that for all $\theta\in\Theta$, there
  exist $C>0$ and $\rho\in(0,1)$ such that   $\mathrm{Lip}_n^\theta \leq C\ \rho^n$ for
  all $n\in\zsetpnz$.
\end{lemma}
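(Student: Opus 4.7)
My plan is to pass from the scalar-valued iterated link function $\f{y}$ to the $\Zset$-valued map $\F{y}$ defined by~(\ref{eq:notationItere:f:cl:gen}), which enjoys a clean sub\-multi\-pli\-ca\-tive Lipschitz property by construction, and then deduce the exponential bound via a standard Fekete-type argument.

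First I would introduce, for each $\theta\in\Theta$ and $n\in\zsetpnz$,
\begin{equation*}
  L_n^\theta = \sup\set{\Zmet(\F{y}(z),\F{y}(z'))/\Zmet(z,z')}{(z,z',y)\in\Zset^2\times\Yset^n,\ z\neq z'}\,.
\end{equation*}
Two elementary comparisons then tie $L_n^\theta$ to $\mathrm{Lip}_n^\theta$. Using~(\ref{eq:psi:Psi:relation}) and the fact that $\Proj_p:\Zset\to\Xset$ is $1$-Lipschitz with respect to $\Zmet$ and $\Xmet$ (which is immediate from~(\ref{eq:def:Zmet})), I get $\mathrm{Lip}_n^\theta\leq L_n^\theta$. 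Conversely, reading off~(\ref{eq:psi:Psi:relation:recip}) component-wise with the max-metric $\Zmet$, each component of $\F{\chunk y0{n-1}}(z)$ is either a projection $\Proj_j$, a constant in $z$, or an $\f{\chunk y0j}$ with $n-p\leq j\leq n-1$, which yields $L_n^\theta\leq\max(1,\mathrm{Lip}_{n-p+1}^\theta,\ldots,\mathrm{Lip}_n^\theta)$ for $n\geq q-1$.

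Next I would prove the submultiplicative bound
\begin{equation*}
  L_{n+m}^\theta\leq L_n^\theta\,L_m^\theta\quad\text{for all }n,m\in\zsetpnz\,.
\end{equation*}
This follows from the composition identity
$\F{\chunk y0{n+m-1}}=\F{\chunk y{n}{n+m-1}}\circ\F{\chunk y0{n-1}}$,
which itself is a direct consequence of~(\ref{eq:notationItere:f:cl:gen}). Taking suprema over $y\in\Yset^{n+m}$ and using that the supremum over $\Yset^{n+m}$ decomposes independently over the first $n$ and last $m$ coordinates gives the inequality.

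Finally, by the above and \ref{assum:bound:rho:gen}, $L_n^\theta\to 0$ as $n\to\infty$, so there exists $N\in\zsetpnz$ with $\alpha:=L_N^\theta<1$. Since $L_1^\theta\leq\max(1,\mathrm{Lip}_1^\theta)<\infty$ (again by the second comparison above applied with a trivial adjustment for small $n$), submultiplicativity gives $L_r^\theta\leq (L_1^\theta)^r<\infty$ for every $r\in\{0,\dots,N-1\}$, with the convention $L_0^\theta=1$. Writing $n=kN+r$ with $0\leq r<N$, iterated submultiplicativity yields
\begin{equation*}
  L_n^\theta\leq L_r^\theta\,(L_N^\theta)^k\leq \Big(\max_{0\leq r<N}L_r^\theta\Big)\,\alpha^{(n-r)/N}\leq C\,\rho^n\,,
\end{equation*}
with $\rho=\alpha^{1/N}\in(0,1)$ and $C=\alpha^{-1}\max_{0\leq r<N}L_r^\theta$. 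Combining with $\mathrm{Lip}_n^\theta\leq L_n^\theta$ finishes the proof. The only slightly delicate step is the first comparison between $L_n^\theta$ and the family $\{\mathrm{Lip}_m^\theta\}$, which requires a careful reading of~(\ref{eq:psi:Psi:relation:recip}) and the definition of $\Zmet$ to see that non-$\f{}$ components contribute at most $1$ or $0$ to the Lipschitz constant; everything else is the routine Fekete sub\-multi\-pli\-ca\-tive argument.
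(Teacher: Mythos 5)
Your proof is correct and follows essentially the same route as the paper: you bound the uniform Lipschitz constant of the vector-valued iterate $\F{y}$ above and below by the scalar constants $\mathrm{Lip}_n^\theta$ via~(\ref{eq:psi:Psi:relation}) and~(\ref{eq:psi:Psi:relation:recip}), then exploit the composition structure of~(\ref{eq:notationItere:f:cl:gen}) to get geometric decay once some block has Lipschitz constant below one. The paper phrases this as an explicit block decomposition $\F{\chunk y{1-m}0}\circ\dots\circ\F{\chunk y{-n}{(-km)}}$ rather than as abstract submultiplicativity of $L_n^\theta$, but the two arguments are the same.
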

\begin{proof}

By~(\ref{eq:Lip:constant:n:def}),~(\ref{eq:def:Zmet})
  and~(\ref{eq:psi:Psi:relation:recip}), we have, for all $n\in\zsetpnz$, using the
  convention $\mathrm{Lip}^\theta_{m}=1$ for $m\leq0$,
  \begin{equation}
    \label{eq:lipschitz:Psi}
  \sup_{u\in\Uset^{n},v\in\Zset^2}\frac{\Zmet\circ\tF{u}^{\otimes2}(v)}{\Zmet(v)}\leq
\1_{\{n< q\}}\vee\left(  \max_{0\leq j< p}\mathrm{Lip}_{n-j}^\theta\right)\;.
  \end{equation}
   Hence \ref{assum:bound:rho:gen} implies that there exists $m\geq1$ and $L\in(0,1)$ such that,
   for all $u\in\Uset^{m+1}$, $\tF{u}$ is $L$-Lipschitz. Now observe that,
   by~(\ref{eq:psi:Psi:relation}), for all $n=km+r$ with $k\geq0$ and $0\leq
   r<m$, for all $u=\chunk u{-n}0\in\Uset^{n+1}$, we can write $\tF{u}$ as
$$
\tF{\chunk u{(1-m)}0}\circ\tF{\chunk u{(1-2m)}{(-m)}}\circ\dots\circ\tF{\chunk
  u{(1-km)}{(-(k-1)m)}}\circ \tF{\chunk u{-n}{(-km)}}\;,
$$
and in this composition, the $k$ first functions are $L$ Lipschitz and the last
one is $L'=1\vee\max\set{\mathrm{Lip}^\theta_{j}}{0< j\leq m}$-Lipschitz. Hence, for all $z,z'\in\Zset$,
$$
\Xmet(\tf{u}(z),\tf{u}(z'))\leq\Zmet(\tF{u}(z),\tF{u}(z'))\leq L'\ L^k\,\Zmet(z,z')\;.
$$
Hence the result by setting $\rho=L^{1/m}\in(0,1)$.
 \end{proof}
 We can now prove \autoref{lem:Lip:XY-cond}.  Let $\theta, \thv \in \Theta$ and let $\initmle\in\Zset$. Denote for all $n\in\zsetp$,
 $$
 X^{(n)}=\tf{\chunk U{(-n)}0}(\initmle)\,.
 $$
 Then,
 by~(\ref{eq:notationItere:f:cl:gen}) we have, for all $n\in\zsetpnz$,
  $$
  X^{(n)}=\tf{\chunk U{-n+1}0}\circ
  \tilde\Psi^\theta_{U_{-n}}(\initmle)\;,
  $$
  and, by~(\ref{eq:Lip:constant:n:def}), we get
  \begin{equation}
    \label{eq:cauchy-lem:lip}
  \Xmet\left(  X^{(n)},  X^{(n-1)}\right)\leq \mathrm{Lip}_{n}^\theta\, \Zmet\left(\initmle,\tilde\Psi^\theta_{U_{-n}}(\initmle)\right)\;.
  \end{equation}
   Using~(\ref{eq:def:Zmet}) with $\initmle=(\initmlex,\initmleu)$ and
   $\initmlex_1=\dots=\initmlex_p$ and
   $\initmleu_1=\dots=\initmleu_{q-1}$
   we get that
  $$
  \Zmet\left(\initmle,\tilde\Psi^\theta_{U_{-n}}(\initmle)\right)=
  \Xmet\left(\initmlex_1,\tilde\psi^\theta_{(\initmleu,U_{-n})}(\initmlex)\right)\bigvee\Umet(\initmleu_1,U_{-n})
  $$
  Hence, for all
  $\alpha>0$, Condition~(\ref{eq:cond:moment:XY}) implies as $n\to\infty$,
  $$
  \Zmet\left(\initmle,\tilde\Psi^\theta_{U_{-n}}(\initmle)\right)=
O(\rme^{\alpha n})\qquad \tilde\PP^\thv\as
  $$
  The last display with~(\ref{eq:cauchy-lem:lip}) and
  \autoref{lem:Lip:XY-cond:geom} gives that, $\tilde\PP^\thv\as$,
  $\nsequence{X^{(n)}}{n \in \zsetp}$ is a Cauchy sequence, hence converges in
  $\Xset$. Therefore, $\chunk U{(-\infty)}{0}\in \mathrm{D}^\theta$,
  $\tilde \PP^\thv\as$ By~(\ref{eq:def:gen-ob})
  or~(\ref{eq:def:gen-ob-X}) depending whether an ODM or an ODMX is considered, we also have, under $\PP^\theta$, for all
  $n\in\zsetp$, $X_1=\tf{\chunk U{-n}0}(Z_{-n})$. Thus,~(\ref{eq:Lip:constant:n:def})
  also implies
  $$
  \Xmet(X_1,X^{(n)})\leq  \mathrm{Lip}^\theta_{n+1}\,
  \Zmet\left(Z_{-n},\initmle\right)\qquad\PP^\theta\as
  $$
  By stationarity, $\Zmet(Z_{-n},\initmle)$ is bounded in probability
  under $\PP^\theta$, hence $X^{(n)}$ converges to $X_1$ in
  probability if \ref{assum:bound:rho:gen} holds. We thus
  obtain~(\ref{eq:identif-XY}), and since this holds for all
  $\theta\in\Theta$, Assumption~\ref{item:X:from:Y:determ} holds.

  Let us now check~(\ref{eq:ident_fundamental_E}). Take
  $\thv,\theta\in\Theta$ and suppose that $\theta$ belongs to the set
  in left-hand side of the inclusion~(\ref{eq:ident_fundamental_E}),
  that is, $G^{\theta}=G^{\thv}$ and
  $\tilde{\Psi}^{\theta}_u(z)=\tilde{\Psi}^{\thv}_u(z)$ for all
  $(z,u)\in\mathrm{E}^\thv\times\Uset$. By \autoref{rem:E-theta-Zk} we
  have $Z_k\in\mathrm{E}^\thv$, $\PP^{\thv}\as$ Thus we get that
  $\tilde{\Psi}^{\theta}_{U_k}(Z_k)=\tilde{\Psi}^{\thv}_{U_k}(Z_k)$ 
   $\PP^{\thv}\as$ and with \autoref{rem:def:gen-ob-Z}, we obtain
   that,    $\PP^{\thv}\as$,
   $(Y_k,X_k)$ (resp. $(Y_k,X_k,V_k)$) satisfy the iterative
   equations~(\ref{eq:def:gen-ob}) (resp.~(\ref{eq:def:gen-ob-X})), and
   by~\ref{assum:gen:identif:unique:pi:gen}
   (resp. \ref{assum:gen:identif:unique:pi:gen:X}), we conclude that
   $\PP^\theta=\PP^\thv$. Hence $\theta\in[\thv]$
   and~(\ref{eq:ident_fundamental_E}) is proved.

    Finally, we check that~(\ref{eq:identif-XYa-thv}) holds when $\tilde \psi^\theta_u$
  is continuous for all $u\in\Uset^q$. Since we have shown that  $\chunk U{(-\infty)}{0}\in \mathrm{D}^\theta$, $\tilde\PP^{\thv}\as$ and $\tilde\PP^{\thv}$ is shift invariant, we
  have, for all $k\in\zset$,
  $$
  \tf[\theta]{\chunk U{(-\infty)}{k}}=\lim_{n\to\infty}\tf{\chunk
    U{-n}k}(\initmle)\quad\tilde\PP^{\thv}\as\;.
  $$
  Observe that, for all $n\geq p\vee q$,
  we have, for all $\chunk u{(-n)}0\in\Uset^{n+1}$,
  $$
  \tf{\chunk u{(-n)}0}(\initmle)=\tilde \psi^\theta_{\chunk u{(-q+1)}0}\left(\setvect{\tf{\chunk
      u{(-n)}{(-j)}}(\initmle)}{-p\leq j\leq -1}\right)\;.
  $$
  By continuity of $\tilde \psi^\theta_{u}$ and using the previous display, we can
  take the limit as $n\to\infty$  under $\tilde\PP^{\thv}$ and
  obtain~(\ref{eq:identif-XYa-thv}).
\subsection{Proof of \autoref{thm:strong:identifiability:gen:od}}
\label{sec:proof-converse-inclusion}
First observe that~(\ref{eq:identif-XY}) implies for all $\theta \in \Theta$,
  \begin{equation}
    \label{eq:cond-prob-XY-psi}
\tCPPu[\theta]{Y_1\in\cdot}{\chunk Y{(-\infty)}0}=G^\theta\left(\tf[\theta]{\chunk{U}{(-\infty)}{0}};\cdot\right)\qquad\tilde{\PP}^\theta\as
  \end{equation}
  Let us now show that any $\theta\in[\thv]$ belongs to
  $[\thv]_G\cap\langle\thv\rangle$, that is, $\theta\in[\thv]_G$,
  and~(\ref{eq:to-prove-deidentif:gen:od}) and~(\ref{eq:identif-XYa-thv}) hold
  true. Since $\tilde\PP^\theta=\tilde\PP^\thv$,~(\ref{eq:cond-prob-XY-psi}),
  which also holds with $\theta$ replaced by $\thv$, yields
$$
G^\theta\left(\tf[\theta]{\chunk{U}{(-\infty)}{0}}; \cdot
\right)=G^\thv\left(\tf[\thv]{\chunk{U}{(-\infty)}{0}}; \cdot \right)\qquad{\tilde{\PP}^\thv}\as
$$
By~\ref{item:ident:G:gen:od:eq}, we obtain that
$\theta\in[\thv]_G$ and~(\ref{eq:to-prove-deidentif:gen:od}) holds.
By \autoref{lem:identif-XY},~(\ref{eq:identif-XY}) implies~(\ref{eq:identif-XYa}), and using
$\tilde\PP^\theta=\tilde\PP^\thv$, we obtain~(\ref{eq:identif-XYa-thv}).
Thus $\theta\in\langle\thv\rangle$.

It remains to show that
$[\thv]_G\cap\langle\thv\rangle\subseteq[\thv]$. We prove this
inclusion in the case of an ODMX
satisfying~\ref{assum:gen:identif:unique:pi:gen:X}. (The case of an
ODM is readily obtained by removing the variables $V_k$'s in the
reasoning).  Let $\theta\in[\thv]_G$ such
that~(\ref{eq:to-prove-deidentif:gen:od})
and~(\ref{eq:identif-XYa-thv}) hold true. Since~(\ref{eq:identif-XY})
holds with $\theta$ replaced by
$\thv$,~(\ref{eq:to-prove-deidentif:gen:od}) gives that
$X_1=\tf[\theta]{\chunk U{(-\infty)}0}$ $\PP^{\thv}\as$ Since
$\PP^{\thv}$ is shift invariant, we get, for all $k\in\zset$,
$X_{k+1}=\tf[\theta]{\chunk U{(-\infty)}k}$ $\PP^{\thv}\as$
With~(\ref{eq:identif-XYa-thv}), we obtain
$X_1=\tilde\psi^\theta_{\chunk U{(-q+1)}{0}}\left(\chunk
  X{(-p+1)}{0}\right)$ $\PP^\thv\as$ Since $\PP^\thv$ is shift
invariant, we thus have, for all $k\in\zset$,
  \begin{equation}
    \label{eq:link-XY-transfer}
X_{k+1}=\tilde\psi^\theta_{\chunk
   U{(k+1-q)}{k}}\left(\chunk X{(k+1-p)}{k}\right)
     \quad\PP^\thv\as
   \end{equation}
   On the other hand, by definition of $\PP^\thv$ and
using~\ref{item:ident:G:gen:od:eq} with $\theta\in[\thv]_G$, we have that
$$
\CPPu[\thv]{Y_1\in\cdot}{\chunk X{(-\infty)}1,\chunk Y{(-\infty)}0,\chunk V{(-\infty)}0}=
G^\thv(X_1;\cdot)=G^\theta(X_1;\cdot)\qquad\PP^\thv\as
$$
And using again that $\PP^\thv$ is shift-invariant, for all $k\in\zset$,
$$
\CPPu[\thv]{Y_{k}\in\cdot}{\chunk X{(-\infty)}k,\chunk Y{(-\infty)}{(k-1)},\chunk V{(-\infty)}{(k-1)}}=
G^\theta(X_k;\cdot)\qquad\PP^\thv\as
$$
This, with~(\ref{eq:link-XY-transfer}), shows that $\PP^\thv$ is a
shift-invariant solution
of~(\ref{eq:def:gen-ob-X}). By~\ref{assum:gen:identif:unique:pi:gen:X}, we conclude
that $\PP^\thv=\PP^\theta$, and thus $\theta\in[\thv]$.

\subsection{Proof of \autoref{lem:ergo:ass:1}}
\label{sec:proof-lem:ergo:ass:1}

\noindent\textbf{Assertion~\ref{item:item:CCMShyp}} is obvious.

\noindent\textbf{Proof of Assertion~\ref{item:assum:bound:rho:gen}.}
In the  vector linear setting we have, for all $n\in\zsetpnz$ and all $(z,z',u)\in\Zset^{2}\times\Uset^{n}$,
$$
\Xmet(\tf{u}(z),\tf{u}(z'))=\left|\check{\psi}_n^{\theta}(z-z')\right|\;,
$$
where $|\cdot|$ is a norm on $\rset^{p'}$ and, for all $n\in\zsetpnz$,
$\check{\psi}_n^{\theta}$ is a linear mapping from
$\Zset=\rset^{p'*p+q'*(q-1)}$ to $\Xset=\rset^{p'}$ recursively
defined by setting, for all $w=\chunk w1{(p+q+1)}\in\Zset=(\rset^{p'})^p\times(\rset^{q'})^{q-1}$,
$\check{\psi}_n^{\theta}(w)=x_n$ with
\begin{align}
  \label{eq:pq-order-rec-equation-diff-lin}
  \begin{cases}
    u_j=w_{p+q+j} \;,&
    -q< j\leq -1\;, \\
    u_j=0 \;,&
    0\leq j\leq n-1\;, \\
    x_j=w_{p+j} \;,&
    -p< j\leq 0\;,\\
    x_j=\sum_{k=1}^pA_k(\theta)x_{j-k}+\sum_{k=1}^qB_k(\theta)u_{j-k}\;,&
    1\leq j\;.
  \end{cases}
\end{align}
In particular, we have, for all $j\geq q$,
$$
x_j=\sum_{k=1}^pA_k(\theta)x_{j-k}\;,
$$
and this equation is also true for $j=1,\dots,q-1$ if the last $q-1$,
$\rset^{q'}$-valued, component of $w$ are equal to zero. It is well
known that the Lipshitz norm of such iterative linear functions goes
to zero if and only if~\ref{item:vlodm-ident-cond-cond-dens-def} holds.

\noindent\textbf{Proof of
  Assertion~\ref{item:item:ident-cond-log-moment}.}   Take an arbitrary $\initmlex_1\in\Xset$. If $q>1$, take also an arbitrary  $\initmleu_1\in\Uset$ and set $\initmleu=(\initmleu_1,\dots,\initmleu_1) \in\Uset^{q-1}$.  Then, since
$\tilde \psi^\theta_u(x)$ is of the
form~\eqref{eq:tilde-psi:affine:vector:case}, there exists constants
$C_1,C_2>0$ only depending on $\theta$, $\initmlex_1$ and
$\initmleu_1$ such that, for all $u\in\Uset$,
$$
\Xmet\left(\initmlex_1,\tilde \psi^\theta_{(\initmleu,u)}(\initmlex)\right)
\leq C_1+C_2\,|u| \;.
$$
Assertion~\ref{item:item:ident-cond-log-moment} follows.

\noindent\textbf{Assertion~\ref{item:item:continuity-in-x-for-u-given}}
is obvious.

\noindent\textbf{Proof of Assertions~\ref{item:item:ident-nu-nondegenerate} and~\ref{item:item:ident-nu-nondegenerate-X}.} See
Remark~\ref{rem:ident-non-degenerate}~\ref{item:rem:ident-non-degenerate2}
in the case of a VLODM. The case of a VLODMX is similar.

\subsection{Proof of \autoref{lem:vlodm:angle-set}}
\label{sec:proof-lem:vlodm:angle-set}
Note that, by \autoref{lem:ergo:ass:1}~\ref{item:assum:bound:rho:gen},
in the vector linear case, the set $\mathrm{E}^\theta$ of
\autoref{def:E-sets} is well defined
under~\ref{item:vlodm-ident-cond-cond-dens-def}. We need the following
result whose proof is straightforward, and thus omitted.
\begin{lemma}
\label{lem:vlodm:Dtheta-Etheta}  Suppose that~\ref{item:vlodm-ident-cond-cond-dens-def} holds and let $\theta\in\Theta$.
Let $\ell^1(\zset,\Uset)$ denote the set of sequences in $\Uset^\zset$
that are absolutely summable.
For any
$u\in\ell^1(\zset,\Uset)$, there is a unique
$x\in\ell^\infty(\zset,\Xset)$ (the set of bounded sequences valued in $\Xset$)
such that
\begin{equation}
  \label{eq:pq-order-rec-equation-vlodm-one-param-allZ}
  x_j=\boldsymbol{\omega}(\theta)+\sum_{k=1}^p A_k(\theta)\,x_{j-k}+\sum_{k=1}^q B_k(\theta) \, u_{j-i}\;,
\qquad  j\in\zset\;.
\end{equation}
This unique solution is given by
$$
x_j=
\left(\mathrm{I}_{p'}-\sum_{k=1}^pA_k(\theta)\right)^{-1}\boldsymbol{\omega}(\theta)
+\int_{-\pi}^\pi\rme^{\rmi\lambda(j+p-q)}\mathbf{R}(\rme^{\rmi\lambda};\theta)
\hat u(\lambda)\;\rmd\lambda\;,\qquad j\in\zset\;,
$$
where $\mathbf{R}$ is defined by~(\ref{eq:RpDef}) and $\hat u$ denotes
the Fourier series of $u$ defined by
$$
\hat
u(\lambda)=\frac1{2\pi}\sum_{k\in\zset}u_k\rme^{-\rmi\lambda
  k}\,,\qquad\lambda\in\rset\;.
$$
Let $\mathrm{D}^\theta$ and $\mathrm{E}^\theta$ be as
in~(\ref{eq:definition:f:asalimit:with:domain}) and
\autoref{def:E-sets}. Then $\mathrm{D}^\theta$ contains
$\ell^1(\zset,\Uset)$ and, for any $u\in\ell^1(\zset,\Uset)$, defining
$x$ as the unique solution
of~(\ref{eq:pq-order-rec-equation-vlodm-one-param-allZ}) in
$\ell^\infty(\zset,\Xset)$, we have, for all $t\in\zset$,
$$
\left(\chunk x{(t-p+1)}{t},\chunk u{(t-q+1)}{(t-1)}\right)=\tF{\chunk u{(-\infty)}{(t-1)}}\in\mathrm{E}^\theta\;.
$$
\end{lemma}
We can now prove  \autoref{lem:vlodm:angle-set}.
\begin{proof}[Proof of  \autoref{lem:vlodm:angle-set}]
  \noindent\textbf{Step~1:
    Assertion~\ref{item:theta-well-chosen-E-again} implies Assertion~\ref{item:theta-well-chosen-E-CNS}.}  Let $\theta,\thv\in\Theta$
satisfying Assertion~\ref{item:theta-well-chosen-E-again} and let us show
that~(\ref{eq:vlodm:iden:omega-eq}) and~(\ref{eq:vlodm:iden:R-eq}) hold.
Take any $u\in\ell^1(\zset,\Uset)$ and $n\in\zsetp$. By \autoref{lem:vlodm:Dtheta-Etheta}, $u\in \mathrm{D}^\thv$ and we have
\begin{align*}
\tF[\thv]{\chunk u{(-\infty)}{0}}&=\tF[\thv]{\chunk u{(-n)}{0}}
                                     (\tF[\thv]{\chunk u{(-\infty)}{(-n-1)}})\\
  &=\tF[\theta]{\chunk u{(-n)}{0}}(\tF[\thv]{\chunk u{(-\infty)}{(-n-1)}})\; ,
\end{align*}
where the second equality follows from applying successively
Assertion~\ref{item:theta-well-chosen-E-again} with $u=u_{k}$ and
$z=\tF[\thv]{\chunk u{(-\infty)}{(k-1)}}$ for $k=-n,-n+1,\dots,0$.
On the other hand by definition of $\mathrm{Lip}_n^\theta$
in~(\ref{eq:Lip:constant:n:def}), we have, setting
$z^{\star}_{n}:=\tF[\thv]{\chunk u{(-\infty)}{(-n-1)}}$ and $z_n=\tF{\chunk u{(-\infty)}{(-n-1)}}$,
\begin{align*}
\Zmet\left(\tF[\theta]{\chunk u{(-\infty)}{0}},\tF[\theta]{\chunk
  u{(-n)}{(0)}}(z^{\star}_{n})\right)
  &=
\Zmet\left(\tF[\theta]{\chunk u{(-n)}{0}}\left(z_{n}\right),\tF[\theta]{\chunk
  u{(-n)}{(0)}}(z^{\star}_{n})\right)
\\
  &\leq
\mathrm{Lip}_n^\theta\;\;\Zmet\left(z_{n},z^{\star}_{n}\right)\;.
\end{align*}
Since $u\in\ell^1(\zset,\Uset)$, we have that $(z_n)$ and $(z^{\star}_n)$
are summable sequences (as a consequence of
\autoref{lem:vlodm:Dtheta-Etheta}) and so
$\Zmet\left(z_{n},z^{\star}_{n}\right)$ is bounded as $n\to\infty$.
Using  \autoref{lem:ergo:ass:1}~\ref{item:assum:bound:rho:gen} and we
conclude that the upper bound in the
last display converges to 0 as $n\to\infty$. By definition of $z^{\star}_n$
and the previous display, this gives that $\tF[\theta]{\chunk
  u{(-\infty)}{0}}= \tF[\thv]{\chunk u{(-\infty)}{0}}$.
Shifting the sequence $u$, we also have that
$\tF[\theta]{\chunk
  u{(-\infty)}{t}}= \tF[\thv]{\chunk u{(-\infty)}{t}}$ for all
$t\in\zset$ and by \autoref{lem:vlodm:Dtheta-Etheta}, this implies
that $\theta$ and $\thv$ share the same unique solution
$x\in\ell^\infty(\zset,\Xset)$ to the
equation~(\ref{eq:pq-order-rec-equation-vlodm-one-param-allZ}).
Using the explicit form of this solution in the same lemma, we get
that, for all  $u\in\ell^1(\zset,\Uset)$ and $\tau\in\zset$,
$$
\left(\mathrm{I}_{p'}-\sum_{k=1}^pA_k(\theta)\right)^{-1}\boldsymbol{\omega}(\theta)
+\alpha_\tau(u;\theta)=\left(\mathrm{I}_{p'}-\sum_{k=1}^pA_k(\thv)\right)^{-1}\boldsymbol{\omega}(\thv)
+\alpha_\tau(u;\thv)\;,
$$
where, for all $\theta\in\Theta$, $u\in\ell^1(\zset,\Uset)$ and $\tau\in\zset$, we set
$\displaystyle\alpha_\tau(u;\theta)=\int_{-\pi}^\pi\rme^{\rmi\lambda\,\tau}\mathbf{R}(\rme^{\rmi\lambda};\theta)
\hat u(\lambda)\;\rmd\lambda$. Since we assumed
$\{0\}\subsetneq\Uset$, we can successively take 
$u$ as the zero sequence ($u_k=0$ for all $k$, implying $\hat
u\equiv0$) or proportional to the impulse sequence  ($u_0\neq0$,
$u_k=0$ for all $k\neq0$, implying $\hat u\equiv u_0/(2\pi)$), the previous
display successively leads to~(\ref{eq:vlodm:iden:omega-eq}) and
$$
\int_{-\pi}^\pi\rme^{\rmi\lambda\,\tau}\;\mathbf{R}(\rme^{\rmi\lambda};\theta)
\;\rmd\lambda=\int_{-\pi}^\pi\rme^{\rmi\lambda\,\tau}\;\mathbf{R}(\rme^{\rmi\lambda};\thv)\;,\quad\text{for
  all}\quad \tau\in\zset\;,
$$
which implies~(\ref{eq:vlodm:iden:R-eq}).

\noindent\textbf{Step~2: Assertion~\ref{item:theta-well-chosen-E-CNS} implies Assertion~\ref{item:theta-well-chosen-E-again}.}
  Let $\theta,\thv\in\Theta$
satisfying~(\ref{eq:vlodm:iden:omega-eq})
and~(\ref{eq:vlodm:iden:R-eq}), and let us show
that  Assertion~\ref{item:theta-well-chosen-E-CNS} holds.
First take $u\in\ell^1(\zset,\Uset)$. By
\autoref{lem:vlodm:Dtheta-Etheta},~(\ref{eq:vlodm:iden:omega-eq})
and~(\ref{eq:vlodm:iden:R-eq}) imply that the recursive
equation~(\ref{eq:pq-order-rec-equation-vlodm-one-param-allZ}) and the
one with $\theta$ replaced by $\thv$  share
the same bounded solution. Moreover,
we have
$u\in\mathrm{D}^\thv\cap\mathrm{D}^\theta$ and since $\tF{\chunk u{(-\infty)}0}$
$\tF[\thv]{\chunk u{(-\infty)}0}$ are given by the same solution they
are equal. Hence we obtain that
$$
\tilde\Psi^\theta_u(z)=\tilde\Psi^{\thv}_u(z)\qquad\text{for all}\quad
(z,u)\in\mathrm{E}^\thv_1\times\Uset\;,
$$
where $\mathrm{E}^\thv_1=\set{\tF[\thv]{v}}{v\in\ell^1(\zsetn,\Uset)}$. To
get Assertion~\ref{item:theta-well-chosen-E-again}, since
$z\mapsto\tilde\Psi^{\theta'}_u(z)$ is continuous for
$\theta'=\theta,\thv$ and for any $u\in\rset^{q'}$,
it is now sufficient to
prove that $\mathrm{E}^\thv_1$ is dense in $\mathrm{E}^\thv$.
To this end, pick $z\in\mathrm{E}^\thv$. Then there exists
$u\in\mathrm{D}^\thv$
such that
\begin{equation}
  \label{eq:vlodm:density-of-E1-z-given}
z=\displaystyle  \tF[\theta]{u}\;.
\end{equation}
Define, for any $n\in\zsetp$, we introduce the truncated  sequence
$$
 v^{(n)}_k=
 \begin{cases}
u_k
&\text{ if $k\in\{-n,\dots,0\}$}\\
0&\text{ otherwise.}
 \end{cases}
$$
Then $v^{(n)}\in\ell^1(\zset,\Uset)$ and we have
$$
z_n:=\tF[\thv]{\chunk
  {v^{(n)}}{(-\infty)}{0}}\in\mathrm{E}_1^\thv\;.
$$
Moreover, we can write, denoting by $\chunk 0{(-\infty)}{0}$ the null
sequence in $\Uset^{\zsetn}$,
$$
z_n=\tF[\thv]{\chunk v{(-n)}{0}}\left(\tF[\thv]{\chunk
    0{(-\infty)}{0}}\right)
=\tF[\thv]{\chunk u{(-n)}{0}}\left(\tF[\thv]{\chunk 0{(-\infty)}{0}}\right)\;.
$$
By~(\ref{eq:definition:fF:asalimit:with:domain})
and~(\ref{eq:vlodm:density-of-E1-z-given}) we thus have
$z=\lim_{n\to\infty}z_n$, and since $z$ is arbitrary in
$\mathrm{E}^\thv$ we have shown that  $\mathrm{E}^\thv_1$ is dense in
$\mathrm{E}^\thv$ and the proof is concluded.
\end{proof}

\section*{Acknowledgments}
In the first version of this contribution, we neither investigated the
VLODM case nor the case with exogenous covariates.  We are grateful to
the two referees that reviewed the first submission, whose fruitful
and constructive comments motivated these demanding extensions.

\end{document}